\title{\textbf{Monochromatic Components in Edge-Coloured Graphs with Large Minimum Degree}}
\author{Hannah Guggiari\thanks{Mathematical Institute, University of Oxford, Oxford, OX2 6GG, United Kingdom. Email:\texttt{\{guggiari,scott\}@maths.ox.ac.uk}}~ and Alex Scott\footnotemark[1] \thanks{Supported by a Leverhulme Trust Research Fellowship.}}
\date{\today}
\tikzset{main node/.style={circle,fill=black,draw,minimum width=4pt,inner sep=0pt}}
\newtheorem{theorem}{Theorem}[section]
\newtheorem{conjecture}[theorem]{Conjecture}
\newtheorem{lemma}[theorem]{Lemma}
\newtheorem{question}[theorem]{Question}
\newcommand{\lex}{\text{lex}}
\newcommand{\maxlex}{\text{maxlex}}
\begin{document}
\newpage
\maketitle
\begin{abstract}
\noindent
For every $n\in\mathbb{N}$ and $k\geq2$, Gy\'{a}rf\'{a}s showed that every $k$-edge-colouring of the complete graph on $n$ vertices contains a monochromatic connected component of order at least $\frac{n}{k-1}$. For $k\geq3$, Gy\'{a}rf\'{a}s and S\'{a}rk\"{o}zy proved that the complete graph can be replaced by a graph $G$ with $\delta(G)\geq(1-\varepsilon_k)n$ for some constant $\varepsilon_k$. In this paper, we show that the maximum possible value of $\varepsilon_3$ is $\frac16$. We will also show that $\varepsilon_k\le\frac{k-2}{k(k-1)}$ for infinitely many $k\geq3$. This disproves a conjecture of Gy\'{a}rfas and S\'{a}rk\"{o}zy.
\end{abstract}
\noindent
\section{Introduction}
Erd\H{o}s and Rado noted that, for any graph $G$, either $G$ or its complement is connected. This is equivalent to the statement that every 2-edge-colouring of a complete graph contains a monochromatic spanning tree. Gy\'{a}rf\'{a}s \cite{G77} extended this result to $k\geq3$ colours. He proved the following theorem.
\begin{theorem}[Gy\'{a}rf\'{a}s \cite{G77}]
\label{thm:Kn}
Fix $k\geq2$. In every $k$-edge-colouring of the complete graph on $n$ vertices, there exists a monochromatic component of order at least $\frac n{k-1}$.
\end{theorem}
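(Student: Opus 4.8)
The plan is to argue by induction on $k$. The base case $k=2$ is exactly the Erd\H{o}s--Rado observation quoted above: if one of the two colour classes is disconnected, take one of its components $X$, a nonempty proper subset of $V(K_n)$; then every edge between $X$ and $V(K_n)\setminus X$ has the other colour, so that colour class is connected and spans all $n$ vertices, giving a monochromatic component of order $n\ge\frac{n}{k-1}$.

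For the inductive step, fix $k\ge 3$ and a $k$-edge-colouring of $K_n$. If some colour class is connected it already yields a monochromatic component of order $n$, so assume none is, and let $P_1,\dots,P_m$ be the components of colour $k$. If some $|P_i|\ge\frac{n}{k-1}$ we are done --- this includes the case $m\le k-1$, because the $P_i$ partition $V(K_n)$ --- so we may assume $|P_i|<\frac{n}{k-1}$ for every $i$, whence $m\ge k$. Every edge joining two distinct parts has a colour in $\{1,\dots,k-1\}$, so deleting the colour-$k$ edges, which all lie inside the parts, leaves a $(k-1)$-edge-colouring of the complete multipartite graph $H$ on the parts $P_1,\dots,P_m$, and every monochromatic component of $H$ is contained in a monochromatic component of $K_n$. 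It therefore suffices to produce a monochromatic component of order $\ge\frac{n}{k-1}$ in $H$, which is a $(k-1)$-coloured complete multipartite graph with $m\ge k$ parts, each of order $<\frac{n}{k-1}$.

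To finish I would fix a largest part $P_1$ and a vertex $v\in P_1$, and look at the $k-1$ monochromatic components of $H$ through $v$, one per colour: since $v$ is joined in $H$ to every vertex outside $P_1$, these components cover $\{v\}\cup(V(K_n)\setminus P_1)$, so on average they have order at least $\tfrac{n-|P_1|+1}{k-1}$. The real content of the theorem is that this ``single vertex'' estimate only beats the trivial per-vertex bound $\tfrac{n}{k}$, not $\tfrac{n}{k-1}$ --- essentially because $P_1$ is an independent set of $H$, so the rest of $P_1$ need not be covered --- and the extra factor $\tfrac{k}{k-1}$ one must gain is genuinely present: the extremal configurations are blow-ups of an affine plane of order $k-1$ (when such a plane exists), each colour being the union of one of the $k$ parallel classes, and these also show the bound cannot be improved. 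I would try to recover the missing factor by running the count from \emph{all} vertices of $P_1$, indeed of $V(K_n)$, at once, so as to count the pairs of vertices lying in a common monochromatic component globally rather than star by star: a clean double count of such pairs already forces $n>(k-1)^2$, and I would then aim to reduce a general $n$ into the range $n\le(k-1)^2$ by a minimal-counterexample argument --- delete a vertex, apply the smaller case to get a large monochromatic component of $K_{n-1}$, and decide whether the deleted vertex enlarges it back up in $K_n$ --- taking a little extra care when $k-1$ does not divide $n$, where the target $\big\lceil\tfrac{n}{k-1}\big\rceil$ leaves some slack. Turning the local bound $\tfrac{n}{k}$ into the global bound $\tfrac{n}{k-1}$ is the step I expect to be by far the hardest.
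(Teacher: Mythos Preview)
First, the paper does not actually prove Theorem~\ref{thm:Kn}; it is quoted with a reference to Gy\'arf\'as~\cite{G77} and used only as background. There is therefore no proof here to compare your attempt against, and I can only assess your proposal on its own terms.

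Your write-up is not a proof but a plan with an acknowledged hole. The base case $k=2$ is fine, and reducing the inductive step to a $(k-1)$-coloured complete $m$-partite graph on the colour-$k$ components $P_1,\dots,P_m$ is a reasonable first move. From there, however, you only recover the trivial bound: averaging over the $k-1$ components of $H$ through a fixed $v\in P_1$ gives one of order at least $\tfrac{n-|P_1|+1}{k-1}$, which for $|P_1|$ close to $\tfrac{n}{k-1}$ is only about $\tfrac{(k-2)n}{(k-1)^2}<\tfrac{n}{k-1}$. Neither of your two suggested repairs closes this. The global double count yields
\[
\sum_{c=1}^{k}\sum_{D}|D|^{2}\ \ge\ n^{2},
\]
summing over colours $c$ and colour-$c$ components $D$ (every ordered pair $(u,w)$ lies together in the component of the colour of $uw$), whereas the hypothesis $|D|<\tfrac{n}{k-1}$ gives only $\sum_{c}\sum_{D}|D|^{2}<\tfrac{n}{k-1}\sum_{c}\sum_{D}|D|=\tfrac{k}{k-1}\,n^{2}$; since $1<\tfrac{k}{k-1}$ there is no contradiction. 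The minimal-counterexample idea also fails as stated: a component of order $\ge\big\lceil\tfrac{n-1}{k-1}\big\rceil$ in $K_{n-1}$ need not extend to one of order $\ge\big\lceil\tfrac{n}{k-1}\big\rceil$ in $K_n$, because the deleted vertex can lie in a different component of that colour, and when $(k-1)\mid n$ there is no rounding slack to exploit.

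A route that does work, and is only a small tweak of your reduction, is to go \emph{bipartite} rather than multipartite: take any colour-$k$ component $A$, set $B=V\setminus A$, and note that the complete bipartite graph $K_{A,B}$ carries a $(k-1)$-colouring. Gy\'arf\'as's companion bipartite result (also in~\cite{G77}) --- every $r$-colouring of a complete bipartite graph on $a+b$ vertices contains a monochromatic component of order at least $(a+b)/r$ --- applied with $r=k-1$ gives a component of order at least $n/(k-1)$ directly. That bipartite lemma has a short self-contained proof and is the missing ingredient your induction needs.
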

\noindent
It should be noted that Mubayi and, independently, Liu, Morris and Prince found a short and elegant proof of this theorem (see \cite{G11}, Corollary 3.3).

The bound in Theorem \ref{thm:Kn} is sharp if $k-1$ is a prime power and $n$ is divisible by $(k-1)^2$. Consider the affine plane of order $k-1$ and colour the edges in the $i^\text{th}$ parallel class with colour $i$ for each $i\in[k]$. Every monochromatic component contains exactly $\frac n{k-1}$ vertices.

For $k=2$, it is easy to see that the conclusion of Theorem \ref{thm:Kn} does not hold for 2-colourings of the edges of a non-complete graph: if $xy$ is not an edge, then colour edges red if they are incident with $x$ and blue otherwise. However, Gy\'{a}rf\'{a}s and S\'{a}rk\"{o}zy \cite{GS12} proved the following theorem.
\begin{theorem}[Gy\'{a}rf\'{a}s and S\'{a}rk\"{o}zy \cite{GS12}]
\label{thm:2colours}
Let $G$ be a graph of order $n$ with $\delta(G)\geq\frac34n$. If the edges of $G$ are 2-coloured, then there exists a monochromatic component of order at least $\delta(G)+1$.
\end{theorem}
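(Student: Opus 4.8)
The plan is to argue by contradiction. Suppose no monochromatic component has order at least $\delta(G)+1$; writing $d := \delta(G)$, this means every monochromatic component has order at most $d$. I will show that this forces a spanning monochromatic component, which has order $n \ge d+1$ since $d \le n-1$, giving the contradiction.

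First I would record a local observation: for every vertex $v$, its red component $R(v)$ and blue component $B(v)$ both contain $v$ and together cover $N(v)$, and since the red and blue neighbours of $v$ are disjoint, $|R(v)|+|B(v)| \ge \deg(v)+2 \ge d+2$. Hence for every $v$ the larger of the two has order at least $(d+2)/2 > n/4$, using $d \ge \tfrac{3}{4}n$. Fix any vertex $v$ and, after possibly swapping the names of the colours, assume $|R(v)| \ge |B(v)|$; put $R := R(v)$ and $S := V \setminus R$. Then $n/4 < |R| \le d$ (the upper bound by hypothesis), so also $|S| = n-|R| < \tfrac{3}{4}n \le d$. Every edge of $G$ between $R$ and $S$ is blue, for a red such edge would put a vertex of $S$ into $R$.

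Now let $H$ be the bipartite graph on $V = R \cup S$ whose edges are exactly the edges of $G$ with one end in $R$ and one in $S$; all of these are blue. It suffices to prove that $H$ is connected: then the blue subgraph of $G$ has a connected spanning subgraph, so all of $V$ lies in a single blue component. The hard part is this connectivity. First, $H$ has no isolated vertex: a vertex of $R$ with no neighbour in $S$ would have all its $G$-neighbours in $R$ and hence degree at most $|R|-1 < d$, and symmetrically for a vertex of $S$ (using $|S| < d$), both impossible. Suppose then, for contradiction, that $H$ has components $C_1,\dots,C_k$ with $k \ge 2$; write $C_i = R_i \cup S_i$ with $R_i = C_i \cap R$ and $S_i = C_i \cap S$, each nonempty because $H$ has no isolated vertices. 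For $i \ne j$ there is no $G$-edge between $R_i$ and $S_j$, since such an edge would lie in $H$ yet join two different components. Therefore any $u \in R_i$ satisfies $N_G(u) \subseteq (R \setminus \{u\}) \cup S_i$, whence $|S_i| \ge d - |R| + 1$, and symmetrically any $x \in S_i$ satisfies $N_G(x) \subseteq (S \setminus \{x\}) \cup R_i$, whence $|R_i| \ge d - |S| + 1 = d - n + |R| + 1$. Summing over $i$ and using $\sum_i |S_i| = |S|$ and $\sum_i |R_i| = |R|$ gives $n \ge k(2d - n + 2)$. Since $d \ge \tfrac{3}{4}n$ we have $2d - n + 2 \ge \tfrac{n}{2}+2 > \tfrac{n}{2}$, so $k < 2$, contradicting $k \ge 2$. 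Hence $H$ is connected, which completes the argument.

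I expect the step with the components $C_i$ of $H$ to be the crux; everything else is routine bookkeeping. It is also the step where the hypothesis is used sharply: the inequality $n \ge k(2d-n+2)$ excludes $k=2$ exactly because $2\delta(G) - n + 2 > \tfrac{n}{2}$, i.e. because $\delta(G) > \tfrac{3}{4}n - 1$, and a two-component $H$ is precisely the structure arising in the extremal $2$-colourings with $\delta(G) = \tfrac{3}{4}n - 1$ and all monochromatic components of order $\tfrac{n}{2}$. Since we end up producing a spanning monochromatic component, the claimed bound $\delta(G)+1$ follows a fortiori.
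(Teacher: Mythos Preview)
The paper does not prove this theorem; it is quoted from Gy\'arf\'as and S\'ark\"ozy \cite{GS12} and only its sharpness is discussed afterwards, so there is no proof in the paper to compare your attempt against.

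That said, your argument is correct. The local inequality $|R(v)|+|B(v)|\ge d+2$ yields a red component $R$ with $n/4<|R|\le d$ (the upper bound from the contradiction hypothesis), and hence $|S|=n-|R|<\tfrac34 n\le d$. Both lower bounds $d-|R|+1\ge 1$ and $d-|S|+1\ge 1$ are therefore positive, so summing $|S_i|\ge d-|R|+1$ and $|R_i|\ge d-|S|+1$ over the $k\ge 2$ components of the bipartite blue graph $H$ legitimately gives $n\ge k(2d-n+2)$; since $2d-n+2\ge \tfrac n2+2>\tfrac n2$ this forces $k<2$, a contradiction, and the resulting spanning blue component has order $n\ge d+1$. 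Your closing remark is also on point: the inequality $n\ge 2(2d-n+2)$ fails exactly when $d>\tfrac34 n-1$, matching both the threshold and the $4$-part extremal colouring described immediately after the theorem statement.
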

\noindent
Gy\'{a}rf\'{a}s and S\'{a}rk\"{o}zy also showed that the bounds in this theorem are best possible in the following two senses.

Firstly, we cannot reduce $\delta(G)$ below $\frac34n$. Suppose that $n$ is divisible by $4$ and partition the vertices into $4$ sets $V_1$, $V_2$, $V_3$ and $V_4$, each of order $\frac n4$. Colour the following edges red: the edges within each $V_i$, those between $V_1$ and $V_2$ and those between $V_3$ and $V_4$. Colour the edges between $V_1$ and $V_3$ blue and the edges between $V_2$ and $V_4$ blue. All monochromatic components have order $\frac n2$ and $\delta(G)=\frac34n-1$.

Secondly, the largest monochromatic component we can guarantee has order $\delta(G)+1$. Take the complete graph $K_n$ and let $X,Y\subset V(K_n)$ be disjoint vertex sets with $|X|=|Y|=n-\delta-1$ where $\frac n2-1\leq\delta\leq n-1$ (so $0\leq n-\delta-1\leq\frac n2$). Form the graph $G$ by removing all edges between $X$ and $Y$. Colour the edges incident to $X$ red, the edges incident to $Y$ blue and all other edges arbitrarily. Each vertex in $X\cup Y$ has degree exactly $\delta$ and all other vertices have degree $n-1$ so the minimum degree of $G$ is $\delta$. The largest monochromatic component in $G$ in either colour has order $\delta+1$.

For $k\geq3$, the situation is different. In this case, it is possible to remove some edges from a complete graph and still obtain a monochromatic component of order $\frac n{k-1}$ in every $k$-edge-colouring. Indeed, Gy\'{a}rf\'{a}s and S\'{a}rk\"{o}zy \cite{GS17} showed that the complete graph can be replaced by any graph $G$ with $\delta(G)\geq(1-\varepsilon_k)n$ for some constant $\varepsilon_k>0$. They made the following conjecture.
\begin{conjecture}[Gy\'{a}rf\'{a}s and S\'{a}rk\"{o}zy \cite{GS17}]
\label{conj:main}
Fix $k\geq3$. Let $G$ be any graph with $n$ vertices and $\delta(G)\geq\left(1-\frac{k-1}{k^2}\right)n$. If the edges of $G$ are $k$-coloured, then there exists a monochromatic component of order at least $\frac{n}{k-1}$.
\end{conjecture}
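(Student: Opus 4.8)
The plan is to establish matching bounds showing $\varepsilon_3=\frac16$: (i) a construction of a graph $G$ with $\delta(G)$ just below $\frac56 n$ together with a $3$-edge-colouring in which every monochromatic component has fewer than $\frac n2$ vertices --- this gives $\varepsilon_3\le\frac16$, and since $\frac{k-1}{k^2}=\frac29>\frac16$ for $k=3$ it disproves Conjecture \ref{conj:main}; and (ii) the positive statement that $\delta(G)\ge\frac56 n$ forces, in every $3$-edge-colouring, a monochromatic component of order at least $\frac n2$, which gives $\varepsilon_3\ge\frac16$.

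For (i) I would work in a weighted ``reduced'' model: partition $V(G)$ into classes $V_1,\dots,V_r$ with relative sizes $w_1,\dots,w_r$ summing to $1$, make each $V_i$ internally complete, and between two distinct classes put either all edges absent or all edges present with a single common colour. Then each monochromatic component is a union of classes, its relative size is the sum of the corresponding $w_i$, the relative size of the non-neighbourhood of a vertex of $V_i$ is $\sum_{j:\,V_iV_j\text{ absent}}w_j$, and once the colour pattern on the reduced graph is fixed, the condition ``all monochromatic components have relative size $<\frac12$'' is a finite system of linear inequalities in $w_1,\dots,w_r$. One then chooses a colour pattern (naturally, a refinement of the affine-plane colouring of $K_4$ with enough extra classes that each colour class can be broken into components of weight just under $\frac12$) and solves the linear program of minimising $\max_i\sum_{j:\,V_iV_j\text{ absent}}w_j$ over the feasible region; I expect the optimum to be $\frac16$, achieved by a small explicit configuration, and blowing this up to $n$ vertices (distributing any slack) yields $G$.

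For (ii) I would argue by contradiction: fix a $3$-edge-colouring of a graph $G$ with $\delta(G)\ge\frac56 n$ in which every monochromatic component has order $<\frac n2$. For a vertex $v$ and a colour $c$ let $C_c(v)$ be the colour-$c$ component containing $v$; since the at least $\frac56 n$ neighbours of $v$ all lie in $C_1(v)\cup C_2(v)\cup C_3(v)$ we get $|C_1(v)|+|C_2(v)|+|C_3(v)|\ge\frac56 n$, so at most one of the three has order at most $\frac n6$. Now take a largest monochromatic component overall, say a red one $R$ with $|R|=m<\frac n2$: every vertex outside $R$ sends only blue or green edges into $R$, the set $V\setminus R$ has more than $\frac n2$ vertices, each is adjacent to all but fewer than $\frac n6$ vertices of $R$ by blue or green edges, and $V\setminus R$ is contained in no single blue component and no single green component. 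Chasing these facts together with the component bounds produces a system of inequalities relating the orders of the relevant blue, green (and further red) components and of their pairwise intersections; the aim is to cast this as a linear program whose feasibility would force $\delta(G)\le\frac56 n-1$, and to exhibit dual multipliers showing that the constraints can hold only with equality throughout, which pins down the structure and delivers the contradiction.

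The main obstacle is (ii). Unlike the two-colour case, where essentially a single vertex controls the picture, with three colours the component partitions of the three colours interact in a genuinely two-dimensional way, and the argument becomes a careful case analysis according to which components are large and how $R$, a largest blue component and a largest green component cover and intersect one another. Choosing the right parameters to track --- so that the resulting inequalities are correct, few enough to analyse, and tight precisely when $\delta(G)=\frac56 n$ --- is the delicate point, and I would expect to use the extremal configuration from (i) as the guide to what that tight case must be.
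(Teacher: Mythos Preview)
Your plan for part (i) is essentially what the paper does: the explicit constructions in the paper are exactly the optimal solutions of the linear program you describe (six classes of weight $\tfrac16$ each, with a few extra classes to handle divisibility), so your weighted reduced-model approach would recover them.

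For part (ii), however, there is a genuine gap. Your scheme of fixing a largest component $R$ and tracking how $V\setminus R$ meets the blue and green components does not, by itself, yield a \emph{finite} linear program: a priori each colour could have many small components, and the parameters you would need to track proliferate. The paper's key step, which you are missing, is a reduction showing that one may assume each colour has at most three components (or four of size exactly $n/4$ in one colour). This uses two ingredients: first, if some vertex misses a colour then one finishes directly; second, if two components of the same colour together have fewer than $n/2$ vertices, one can merge them---either by adding a missing edge, or, if the bipartite graph between them is already complete, by passing to a $2$-blow-up and recolouring a single edge---without decreasing $\delta(G)/|G|$ or creating a large component. Only after this reduction does the problem become a finite collection of linear programs, indexed by the $\{0,1\}$-pattern of which triples $R_i\cap B_j\cap G_k$ are nonempty.

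Even then, the paper does not produce the dual multipliers by hand as you propose: it solves all of the (roughly $2^{27}$ plus $2^{30}$) resulting linear programs by computer and checks that the optimum never exceeds $\tfrac56$. So your expectation of a short analytic case analysis with an explicit dual certificate is optimistic; the actual proof of (ii) is computer-assisted, and the delicate point is precisely the merging lemma that makes the enumeration finite.
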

\noindent
Recently, there has been some progress towards Conjecture \ref{conj:main}. The best current general result was proved by DeBiasio, Krueger and S\'{a}rk\"{o}zy \cite{DKS18}.
\begin{theorem}[DeBiasio, Krueger and S\'{a}rk\"{o}zy \cite{DKS18}]
\label{thm:3072(k-1)^5}
Fix an integer $k\geq3$ and let $G$ be any graph of order $n$ with $\delta(G)\geq\left(1-\frac{1}{3072(k-1)^5}\right)n$. If the edges of $G$ are $k$-coloured, then there exists a monochromatic component of order at least $\frac{n}{k-1}$.
\end{theorem}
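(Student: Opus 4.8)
Write $t=k-1$ and $\varepsilon=\frac{1}{3072t^5}$, and suppose for a contradiction that $G$ carries a $k$-edge-colouring in which every monochromatic component has order strictly less than $n/t$. Since $\delta(G)\ge(1-\varepsilon)n$, every vertex of $G$ fails to be adjacent to at most $\varepsilon n$ other vertices; in particular any two vertex sets of size larger than $\varepsilon n$ are joined by an edge, and any two of size at least $2\varepsilon n$ are joined by more than half of the possible edges between them. The plan is to show that, under our assumption, the colouring is forced into a quasi-affine structure reminiscent of the extremal colouring described after Theorem~\ref{thm:Kn}: for (most) colours $i$ the colour-$i$ class splits almost all of $V(G)$ into essentially $t$ ``lines'' of order close to $n/t$, and these near-partitions cross one another much as parallel classes of an affine plane of order $t$ would. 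With this rigid picture in hand, the sheer density of $G$ can be used to glue two lines of the same colour into a single monochromatic component of order at least $n/t$, contradicting the hypothesis. The reason for taking $\varepsilon$ so small is that ``quasi-affine'' must then hold with a quantitatively tiny error, small enough that the final gluing step survives.

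\textbf{Extracting the affine skeleton.} Call a monochromatic component \emph{large} if it has order at least $n/(2t)$. For any vertex $v$, the at most $t+1$ monochromatic components through $v$ cover $N_G(v)\cup\{v\}$, hence at least $(1-\varepsilon)n$ vertices; those among them of order less than $n/(2t)$ cover fewer than $(t+1)\cdot\frac{n}{2t}\le\frac34 n<(1-\varepsilon)n$ vertices, so $v$ lies in at least one large monochromatic component. Large components of a fixed colour are pairwise disjoint with orders in $[\frac{n}{2t},\frac{n}{t})$, so there are at most $2t$ per colour and at most $2t(t+1)$ in all, and together they cover $V(G)$. The substance of the proof is to refine this crude covering into the affine skeleton, by repeatedly invoking the minimum-degree condition to forbid non-affine configurations: one shows that two large components of different colours cannot overlap in more than a controlled number of vertices without spawning, through the near-complete bipartite graphs separating them from the rest of $G$, a monochromatic component of order at least $n/t$; and, iterating such local arguments over all pairs of colours, that each colour contributes essentially $t$ large components, that these partition all of $V(G)$ outside a small exceptional set, and that each line of one colour meets each line of every other colour in roughly $n/t^2$ vertices. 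The exponent in $\varepsilon$ is spent here: the error terms pick up a factor of $t$ for each of the $k=t+1$ colours and further factors from the pairwise-intersection bookkeeping, and forcing their sum below the tolerance needed next is exactly what drives $\varepsilon$ down to $\frac{1}{3072t^5}$.

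\textbf{Collapsing the skeleton, and the main obstacle.} Fix a colour, say $1$, with lines $L_1,\dots,L_t$ covering all but a small exceptional set of $V(G)$. There is no colour-$1$ edge between distinct $L_a$'s, so every edge of $G$ between $L_a$ and $L_b$ uses a colour in $\{2,\dots,k\}$; since $|L_a|,|L_b|\ge\frac{n}{2t}\gg\varepsilon n$, the bipartite graph between them is near-complete, so some colour $c\in\{2,\dots,k\}$ carries many of these edges, and an averaging argument places a substantial fraction of $L_a\cup L_b$ into a single colour-$c$ component. Intersecting this component with the line structure of colour $c$ — which partitions almost all of $V(G)$ into its own $t$ lines of order about $n/t$ — one finds that it absorbs a colour-$c$ line together with enough further vertices of $L_a\cup L_b$ to push its order past $n/t$, the required contradiction. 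The genuinely hard part of the whole argument is the middle step, promoting ``every vertex lies in a large monochromatic component'' to the full rigid affine skeleton with a polynomially small error: this demands a delicate, repeatedly applied, local exploitation of the minimum-degree hypothesis to rule out even slightly non-affine behaviour, and it is the lossiness of that analysis that leaves the final bound $\delta(G)\ge\bigl(1-\frac{1}{3072(k-1)^5}\bigr)n$ so far from the truth — which the present paper determines exactly for $k=3$, namely $\delta(G)\ge\frac56 n$.
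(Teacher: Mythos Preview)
The paper does not contain a proof of this theorem: it is quoted from DeBiasio, Krueger and S\'{a}rk\"{o}zy \cite{DKS18} as background, and the paper's own contribution concerns only $k=3$ (Theorems~\ref{thm:counter} and~\ref{thm:56}). There is therefore no proof in the paper to compare your proposal against.

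Assessing the proposal on its own merits, what you have written is a strategy outline rather than a proof, and the outline has a genuine gap at its centre. The opening observations are fine (every vertex lies in at least one large component; there are at most $2t$ large components per colour). But the step you label ``extracting the affine skeleton'' is asserted, not proved: the sentences ``one shows that two large components of different colours cannot overlap in more than a controlled number of vertices without spawning \dots'' and ``iterating such local arguments \dots\ each colour contributes essentially $t$ large components'' are exactly the content of the theorem, and you have not supplied the arguments. You also have not tracked any constants, so there is no way to see from what you wrote why $\frac{1}{3072(k-1)^5}$ is the threshold that emerges. The final ``collapsing'' step is likewise unconvincing as written: in the genuine affine-plane colouring of $K_n$ every monochromatic component has order exactly $n/t$, so the structure you build is \emph{consistent} with all components having order $<n/t+1$; the contradiction must therefore come from a quantitative tension between the error in the quasi-affine approximation and the strict inequality $<n/t$, and you have not explained how that tension is realised.
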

\noindent
For 3 colours, DeBiasio, Krueger and S\'{a}rk\"{o}zy \cite{DKS18} proved a stronger result.
\begin{theorem}[DeBiasio, Krueger and S\'{a}rk\"{o}zy \cite{DKS18}]
\label{thm:7/8}
Let $G$ be a graph of order $n$ with $\delta(G)\geq\frac78n$. In every 3-colouring of the edges of $G$, there exists a monochromatic component of order at least $\frac n2$.
\end{theorem}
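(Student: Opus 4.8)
The plan is to argue by contradiction. Suppose the edges of $G$ have a $3$-colouring, with colours $1,2,3$, in which every monochromatic component has order at most $\frac n2-1$. First I would observe that a moderately large monochromatic component must still be present: writing $C_i(v)$ for the colour-$i$ component of $G$ containing $v$, we have $N(v)\cup\{v\}\subseteq C_1(v)\cup C_2(v)\cup C_3(v)$, and since $v$ lies in all three of these sets, inclusion--exclusion gives $|C_1(v)|+|C_2(v)|+|C_3(v)|\geq\deg(v)+3\geq\frac78 n+3$; hence some monochromatic component has order exceeding $\frac{7n}{24}>\frac n4$. I would then fix a monochromatic component $C$ of maximum order, say in colour $1$; thus $\frac n4<|C|<\frac n2$, and, setting $D:=V(G)\setminus C$, also $\frac n2<|D|<\frac34 n$. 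Since $C$ is a colour-$1$ component of $G$, every edge between $C$ and $D$ has colour $2$ or $3$.

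Next I would pass to the bipartite graph $H:=G[C,D]$ with its induced $\{2,3\}$-colouring. The key feature is that $H$ is nearly complete: since $\delta(G)\geq\frac78 n$, every vertex of $G$ has fewer than $\frac n8$ non-neighbours, so a vertex of $C$ misses fewer than $\frac n8$ vertices of $D$ and a vertex of $D$ misses fewer than $\frac n8$ vertices of $C$; in particular, as $|C|>\frac n4$ and $|D|>\frac n2$, in $H$ every vertex of one part is adjacent to strictly more than half of the other part. The aim is to produce a colour-$2$ or colour-$3$ component of $G$ of order at least $\frac n2$, which will contradict our assumption. One caveat: such a component may be strictly larger than any component of $H$, since colour-$2$ or colour-$3$ edges lying inside $C$ or inside $D$ can amalgamate several components of $H$. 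So the object to examine is a largest colour-$2$ component $P$ of $G$ itself (the roles of colours $2$ and $3$ being symmetric); if $|P|\geq\frac n2$ we are done, so assume $|P|<\frac n2$.

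The structure to exploit is that every edge of $G$ leaving $P$ has colour $1$ or $3$, while --- because there are no colour-$1$ edges between $C$ and $D$ --- every edge of $H$ leaving $P$ has colour $3$. In favourable configurations this closes the argument at once. For instance, if $P\supseteq C$, then by near-completeness any two vertices of $D\setminus P$ have a common neighbour $c\in C\subseteq P$, and the edges from $c$ to $D\setminus P$ have colour $3$ since they leave $P$; hence all of $D\setminus P$ lies in a single colour-$3$ component, of order at least $|D\setminus P|=n-|P|>\frac n2$, a contradiction. I would similarly aim to dispose of the case in which some monochromatic component of $G$ meets $C$ in more than half of $C$, using near-completeness together with the fact that a colour-$1$ edge from $D$ into $C$ is forbidden.

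The remaining case --- every monochromatic component meets $C$ in at most $\frac{|C|}{2}$, and $P$ meets both $C$ and $D$ without containing either --- is the heart of the matter, and I expect it to require a detailed case analysis. One must trace how the colour-$3$ edges leaving $P$ (those between $P\cap C$ and $D\setminus P$, and between $P\cap D$ and $C\setminus P$) are forced, by the near-completeness of $H$, to knit together a colour-$3$ component of order at least $\frac n2$, splitting into subcases according to how $|P\cap C|$ and $|P\cap D|$ compare with $\frac n8$, $\frac{|C|}{2}$ and $\frac{|D|}{2}$, and at each stage keeping track of how colour-$2$ or colour-$3$ edges internal to $C$ or $D$ can enlarge a component beyond its trace in $H$. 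Making this bookkeeping work is the main obstacle. It is also, in essence, where the constant $\frac78$ is consumed: once the per-vertex non-neighbour budget grows past $\frac n6$ --- that is, once $\delta(G)$ drops below $\frac56 n$ --- the near-completeness of $H$ degrades enough that $G$ can be coloured with every monochromatic component of order less than $\frac n2$, which is the regime of the matching lower-bound construction referred to in the abstract.
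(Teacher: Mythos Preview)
The paper does not prove this statement at all: Theorem~\ref{thm:7/8} is quoted from \cite{DKS18} as background, and the present paper's contribution is the sharper bound $\frac56n$ (Theorem~\ref{thm:56}), established by an entirely different method (reduction to finitely many intersection patterns followed by a computer-verified linear-programming search). So there is no ``paper's own proof'' of the $\frac78$ result to compare your proposal against.

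As for your argument on its own merits: the opening moves are sound --- the counting $|C_1(v)|+|C_2(v)|+|C_3(v)|\geq\deg(v)+3$ and the near-completeness of the bipartite graph $H=G[C,D]$ are correct and are standard entry points for this type of problem --- and the subcase $P\supseteq C$ is handled cleanly. But you stop exactly where the difficulty lies. You yourself flag the remaining configuration (where $P$ meets both $C$ and $D$ properly) as ``the heart of the matter'' and then do not carry it out; you only describe the \emph{kind} of bookkeeping you expect to need. That is a genuine gap, not a routine omission: the interaction between components of $H$ and the colour-$2$/colour-$3$ edges internal to $C$ and $D$ is precisely what makes these arguments delicate, and it is not at all clear that splitting on the thresholds $\frac n8$, $\frac{|C|}{2}$, $\frac{|D|}{2}$ alone suffices without further structural input. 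As written, the proposal is a plausible plan but not a proof.
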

\noindent
We will improve upon this result by proving the following theorem.
\begin{theorem}
\label{thm:56}
Let $G$ be a graph of order $n$ with $\delta(G)\geq\frac56n$ where the edges of $G$ have been $3$-coloured. Then $G$ has a monochromatic component with order at least $\frac n2$.
\end{theorem}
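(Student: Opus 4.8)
The plan is to argue by contradiction: suppose $G$ has $n$ vertices, $\delta(G)\ge\frac56 n$, its edges are coloured red, blue and green, and every monochromatic component has fewer than $\frac n2$ vertices. First I would translate the degree hypothesis into information about non-edges. Each vertex has fewer than $\frac n6$ non-neighbours, so $G$ has fewer than $\frac{n^2}{12}$ non-edges in total and, more usefully, no vertex can be non-adjacent to every vertex of a set of size at least $\frac n6$. We may assume $G\neq K_n$, since otherwise Theorem~\ref{thm:Kn} applies directly. Next I record the basic component structure: for each colour $c$ the vertex sets of the colour-$c$ components partition $V(G)$ into parts each of size $<\frac n2$, and there must be at least three such parts (with only two, one of them would have order $\ge\frac n2$).

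The key structural step is, for each colour $c$, to group the colour-$c$ components into two blocks $X_c,Y_c$, as balanced as the component sizes allow; there are no colour-$c$ edges between $X_c$ and $Y_c$. Intersecting the three bipartitions partitions $V(G)$ into eight cells $S_{ijk}$ ($i,j,k\in\{1,2\}$), and the colouring is tightly constrained relative to the cube structure on the indices: between two antipodal cells $S_{ijk}$ and $S_{\bar\imath\bar\jmath\bar k}$ there is no edge of $G$ at all (such an edge could be neither red, blue nor green); between cells differing in exactly two coordinates every edge has the one remaining colour; and between cells differing in exactly one coordinate every edge has one of the two remaining colours. Counting non-edges over the four antipodal pairs of cells then gives $s_1s_8+s_2s_7+s_3s_6+s_4s_5<\frac{n^2}{12}$, where $s_1,\dots,s_8$ are the cell sizes listed so that $s_t$ and $s_{9-t}$ are antipodal.

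The heart of the proof is to show that this eight-cell configuration cannot simultaneously have all monochromatic components of order $<\frac n2$, satisfy the non-edge inequality, and be consistent with the degree bound, which also forces any two non-antipodal cells that are not small to be richly connected — so that as soon as a union of cells contains an edge it lies inside a single colour-$c$ component. Concretely, I would use these facts to locate four cells, or small groups of cells, whose mutual colours reproduce a proper $3$-edge-colouring of $K_4$ and which together cover all but a small portion of $V(G)$; then the two components of $K_4$ in one of the three colours become monochromatic components of $G$ whose orders sum to essentially $n$, contradicting the bound $\frac n2$ unless one of the relevant cells is small, and the deficiency coming from small cells is exactly what the non-edge inequality forbids at the threshold $\delta=\frac56 n$. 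It seems natural to formalise this final balancing as an optimisation — a linear (or bilinear) program in the cell sizes, maximising the minimum degree subject to the structural constraints — and to check that its optimum is $\frac56 n$, with the extremal configuration matching the colouring built for Theorem~\ref{thm:counter}.

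I expect the main obstacle to be precisely this last step. The eight-cell configuration has many ``shapes'', according to which cells are empty or small and, for each distance-one pair of cells, which of the two permitted colours actually occurs; in every case one must either exhibit the $K_4$-pattern or produce a monochromatic component of order $\ge\frac n2$ by hand. Keeping control of the unavoidable imbalance of the blocks $X_c,Y_c$ throughout this case analysis, and verifying that the value $\frac56$ is tight rather than merely sufficient, is where the genuine work lies.
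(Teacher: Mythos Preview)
Your outline and the paper diverge at the structural reduction. The paper does \emph{not} group the colour classes into two blocks. Instead it first proves short lemmas showing that (i) every vertex meets all three colours, (ii) two components in some colour already give a component of order $\ge n/2$, (iii) two components whose union has order $<n/2$ may be merged without decreasing $\delta(G)/n$, and (iv) at most one colour can have four components of size exactly $n/4$. This reduces Theorem~\ref{thm:56} to the cases of exactly $3$ components in each colour, or $3,3,4$ components. The paper then works with the genuine intersections $R_i\cap B_j\cap G_k$ (so $27$ or $36$ cells rather than your $8$), and for every possible pattern of which cells are nonempty solves a \emph{linear} program in the cell sizes, maximising $\alpha$ subject to $|R_i|,|B_j|,|G_k|\le\tfrac12$ and $|R_i\cup B_j\cup G_k|\ge\alpha$ for each occupied cell. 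A computer search over the $\sim 2^{27}$ (resp.\ $\sim 2^{36}$) patterns returns $\alpha\le\tfrac56$.

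The main gap in your plan is that the $8$-cell block structure is too coarse to encode either constraint sharply. On the degree side, a vertex $v\in R_i\cap B_j\cap G_k$ is non-adjacent to \emph{every} cell with all three indices different, which in the paper's $27$-cell picture rules out $8$ of the $27$ cells; in your $8$-cell picture it rules out only the single antipodal cell, so you can extract nothing stronger than ``each antipodal cell has size $<n/6$'' together with the aggregate bound $\sum s_ts_{9-t}<n^2/12$, which is quadratic rather than linear. On the component side, the hypothesis ``every monochromatic component has order $<n/2$'' is not a statement about the $s_t$ at all: your $K_4$ step needs inequalities like $s_{111}+s_{122}<\tfrac n2$, which hold only once you know these two cells lie in a \emph{single} red component, and that in turn requires connectivity of the red bipartite graph between them---a condition that depends on all the cell sizes and breaks into many cases. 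You correctly identify this as the hard part, but the proposal contains no mechanism to carry it through, and it is not clear that the $8$-cell data alone suffice to pin down the constant $\tfrac56$; the paper sidesteps all of this by keeping the components separate and letting the computer handle the resulting case explosion.
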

\noindent
We will then show that $\frac16$ is the largest possible value for $\varepsilon_3$ by proving the following theorem.
\begin{theorem}
\label{thm:counter3}
For every $n\in\mathbb{N}$, there exists a graph $G$ of order $n$ with $\delta(G)\geq\left\lfloor\frac56n\right\rfloor-2$ and a 3-colouring of the edges of $G$ such that every monochromatic component has order strictly less than $\frac n2$.
\end{theorem}
\noindent
This proves that Conjecture \ref{conj:main} is in fact false for every $n\in\mathbb{N}$ and $k=3$. We provide further counter-examples to the conjecture for infinitely many $k$ (and infinitely many $n$ for each such $k$) by proving the following theorem.
\begin{theorem}
\label{thm:counterk}
Let $k\ge3$ be a prime power.  For infinitely many $n\in\mathbb{N}$, there is a graph $G$ of order $n$ with 
$\delta(G)\geq\left(1-\frac{k-2}{k(k-1)}\right)n-2$ 
and a $k$-colouring of the edges of $G$ such that every monochromatic component has order strictly less than $\frac{n}{k-1}$.
\end{theorem}
\noindent
The paper is organised in the following way. In Sections \ref{sec:upper} and \ref{sec:lp}, we will prove Theorem \ref{thm:56}. First we will reduce the problem to two specific cases in Section \ref{sec:upper}. We will then explain how these two cases can be formulated as collections of linear programs and solved in Section \ref{sec:lp}. Then, in Section \ref{sec:lower}, we will prove Theorem \ref{thm:counter3}. Together these results show that $\varepsilon_3=\frac16$. Finally, in Section \ref{sec:counterk}, we will prove Theorem \ref{thm:counterk}, which shows that, for infinitely many $k\geq3$, we must have $\varepsilon_k\le \frac{k-2}{k(k-1)}$. This disproves Conjecture \ref{conj:main}.

\section{Reducing the upper bound to special cases}
\label{sec:upper}
To prove Theorem \ref{thm:56}, we will reduce the problem to one which can be written as a series of linear programs. We solve these linear programs by computer to show that the theorem is true.

We begin by proving a series of lemmas. Throughout this section, we will assume that the $3$ colours used to colour the edges are red, blue and green. If a vertex has no edges of a particular colour, then we do not regard it as being a component of that colour.
\begin{lemma}
\label{lem:allcolours}
Let $G$ be a graph of order $n$ with $\delta(G)\geq\frac56n$. Suppose that the edges of $G$ are $3$-coloured. If there exists a vertex $v$ which is not incident to edges of all $3$ colours, then there exists a monochromatic component of order at least $\frac n2$.
\end{lemma}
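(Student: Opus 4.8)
The plan is to argue by contradiction: suppose every monochromatic component of $G$ has order strictly less than $\frac n2$. Relabelling colours, assume $v$ is incident to no green edge, so that $N(v)$ splits into the set $R$ of neighbours joined to $v$ by a red edge and the set $B$ of neighbours joined to $v$ by a blue edge. Let $\mathcal R$ and $\mathcal B$ be the red and the blue component containing $v$; then $R\cup\{v\}\subseteq\mathcal R$ and $B\cup\{v\}\subseteq\mathcal B$, so our assumption gives $|\mathcal R|,|\mathcal B|<\frac n2$. The only other quantitative input is that $\delta(G)\geq\frac56n$ forces every vertex to have fewer than $\frac n6$ non-neighbours.

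Next I would introduce $R^*=R\setminus\mathcal B$ and $B^*=B\setminus\mathcal R$ and show both are large. Since $R\cap\mathcal B$, $B$ and $\{v\}$ are pairwise disjoint subsets of $\mathcal B$, we get $|R\cap\mathcal B|\le|\mathcal B|-|B|-1$, hence
\[
|R^*|=|R|-|R\cap\mathcal B|\ \geq\ |R|+|B|-|\mathcal B|+1\ =\ |N(v)|-|\mathcal B|+1\ >\ \tfrac56n-\tfrac n2+1\ =\ \tfrac n3+1,
\]
and symmetrically $|B^*|>\frac n3+1$. The key structural point is that every edge between $R^*$ and $B^*$ is green: an edge $rb$ with $r\in R^*$, $b\in B^*$ cannot be red, since $vr$ (red, as $r\in R$) and $rb$ would then form a red path from $v$ to $b$, forcing $b\in\mathcal R$; and it cannot be blue, since $vb$ (blue, as $b\in B$) and $rb$ would form a blue path forcing $r\in\mathcal B$.

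Finally I would show that $R^*\cup B^*$ lies in a single green component. Given any $r_1,r_2\in R^*$, each of them has fewer than $\frac n6$ non-neighbours in $B^*$, so since $|B^*|>\frac n3$ there is some $b\in B^*$ adjacent to both; the edges $r_1b$ and $r_2b$ are green, so $r_1$ and $r_2$ lie in a common green component. Hence all of $R^*$ lies in one green component $\mathcal G$, and since each $b\in B^*$ has a neighbour in $R^*$ (as $|R^*|>\frac n6$), joined to it by a green edge, we also have $B^*\subseteq\mathcal G$. Then $|\mathcal G|\geq|R^*|+|B^*|>\frac{2n}3$, contradicting the assumption that all monochromatic components have order less than $\frac n2$.

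The argument is short and I do not expect a serious obstacle; the one thing to get right is the bound $|R^*|,|B^*|>\frac n3$, which is exactly where the hypotheses $\delta(G)\geq\frac56n$ and ``no monochromatic component of order $\frac n2$'' are both used. (The green-connectivity step could equivalently be phrased as connectivity of the bipartite graph between $R^*$ and $B^*$, proved by the same counting of non-neighbours.) It is worth noting that this actually produces a green component of order more than $\frac{2n}{3}$, so the genuinely extremal configurations for Theorem~\ref{thm:56} must have every vertex incident to all three colours — consistent with the constructions of Section~\ref{sec:lower}.
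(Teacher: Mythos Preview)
Your argument is correct and follows the same overall strategy as the paper: locate a large green component inside the bipartite ``cross'' between the red side and the blue side of $N(v)$, using the $\tfrac n6$ bound on non-neighbours to force connectivity. The paper works instead with the full red and blue components $R$ and $B$, sets $x=|R\cap B|$, and splits into the cases $x\le\tfrac n6$ (green argument) and $x\ge\tfrac n6$ (where $|R|\ge\tfrac n2$ directly). Your choice of $R^*=R\setminus\mathcal B$ and $B^*=B\setminus\mathcal R$ --- restricting to \emph{neighbours} of $v$ outside the opposite component --- is a small but genuine streamlining: the inequality $|R^*|>\tfrac n3$ already absorbs what the paper's case split accomplishes, so you reach the green component in one pass and even obtain the stronger bound $|\mathcal G|>\tfrac{2n}{3}$. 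Nothing is lost, and the one-colour case ($B=\emptyset$) is handled automatically by your contradiction hypothesis, since then $|\mathcal R|\ge |N(v)|+1>\tfrac n2$.
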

\begin{proof}
Colour the edges of $G$ red, blue and green. First consider the case where the vertex $v$ is only incident to edges of one colour, say red. As $\delta(G)\geq\frac56n$, the red component containing $v$ covers at least $\frac56n+1>\frac n2$ of the vertices of $G$.

Now consider the case where $v$ is only incident to edges of two colours, say red and blue. Let $R\subseteq V(G)$ be the vertices of the red component containing $v$ and $B\subseteq V(G)$ be the vertices of the blue component containing $v$. We may assume that $|R|<\frac n2$ and $|B|<\frac n2$.
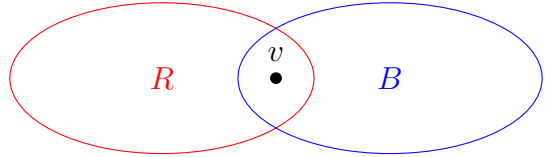
\begin{figure}[H]
\centering
\begin{tikzpicture}
\draw[red] (-1.5,0) ellipse (2cm and 1cm) node {$R$};
\draw[blue] (1.5,0) ellipse (2cm and 1cm) node {$B$};
\node[main node,label={$v$}] (v) at (0,0) {};
\end{tikzpicture}
\caption{The vertices of the red and blue components containing $v$.}
\label{fig:RBv}
\end{figure}
Let $x=|R\cap B|$, $r=|R\setminus B|$ and $b=|B\setminus R|$. Without loss of generality, we will assume that $r\geq b$. As $v\in R\cap B$ and $\delta(G)\geq\frac56n$, we find that $x>0$, $\frac5{12}n<r+x<\frac n2$ and $\frac13n<b+x\leq r+x$.

Suppose that $x<\frac16n$. As $r+x+b\geq\frac56n$ and $r\geq b$, we must have $2r\geq\frac56n-x$ giving $r>\frac13n$. It follows from $b+x>\frac13n$ that $b>\frac16n$. Any edges that are incident to both $R\setminus B$ and $B\setminus R$ must be green. As $\delta(G)\geq\frac56n$ and $r>\frac13n$, every pair of vertices in $B\setminus R$ must have a green neighbour in common in $R\setminus B$. As $b>\frac16n$, every vertex in $R\setminus B$ has a green neighbour in $B\setminus R$. Hence there is a green component covering all of $(R\cup B)\setminus(R\cap B)$. This green component has order at least $\frac16n+\frac13n\geq\frac n2$.

Now suppose that $x\geq\frac16n$. As $r+x+b\geq\frac56n$ and $r\geq b$, it follows that $r\geq\frac5{12}n-\frac12x$ and hence $|R|=r+x\geq\frac5{12}n+\frac12x\geq\frac n2$ giving a red component covering half of the vertices.
\end{proof}
\noindent
Given a graph $G$, the \textit{$t$-blow-up} $G'$ is the graph formed from $G$ by replacing each vertex with a copy of $K_t$ and each edge with a copy of $K_{t,t}$. If the edges of $G$ have been coloured, then the edges of $G'$ are coloured as follows:
\begin{itemize}
\item  edges between two $K_t$ are coloured according to the $3$-edge-colouring on $G$
\item edges within a $K_t$ are coloured arbitrarily
\end{itemize}
The graph $G'$ behaves like a larger version of $G$.
\begin{lemma}
\label{lem:blowup}
Fix $n>2$ and $t>1$. Let $G$ be a graph of order $n$. Colour the edges of $G$ so there is no monochromatic component covering half of the vertices. Let $G'$ be the $t$-blow-up of $G$. Then $G'$ has no monochromatic component covering half of its vertices and further $\delta(G')=t\delta(G)+t-1$.
\end{lemma}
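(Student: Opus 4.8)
The plan is to prove the two assertions separately, dealing first with the (easy) degree computation and then with the structure of monochromatic components. Recall that $G'$ has $tn$ vertices. Write $\hat v$ for the copy of $K_t$ in $G'$ replacing the vertex $v\in V(G)$. A vertex of $G'$ lying in $\hat v$ is adjacent to the other $t-1$ vertices of $\hat v$, and, for each neighbour $w$ of $v$ in $G$, to all $t$ vertices of $\hat w$; hence its degree in $G'$ is exactly $t\deg_G(v)+(t-1)$. Minimising over $v$ gives $\delta(G')=t\delta(G)+t-1$.

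For the components, fix a colour, say red, and let $C$ be any red component of $G$. Since every vertex of $G$ is incident to a red edge, $|C|\geq 2$, so each vertex of $C$ has a red neighbour inside $C$. I claim that $\widehat C:=\bigcup_{w\in C}\hat w$ is precisely a red component of $G'$. It is red-connected: along any red path $u_0u_1\cdots u_k$ inside $C$, consecutive cliques $\hat u_i,\hat u_{i+1}$ are joined by an all-red $K_{t,t}$, so selecting one vertex of each intermediate clique yields a red path between any prescribed vertices of $\hat u_0$ and $\hat u_k$; and two vertices of a single clique $\hat w$ are joined in the same way via a red neighbour's clique. It is also maximal: if $x\in\hat w$ with $w\in C$ had a red edge to some $y\notin\widehat C$, then $y$ would lie in a different clique $\hat{w'}$ with $ww'\in E(G)$, and since every $\hat w$--$\hat{w'}$ edge has the colour of $ww'$, the edge $ww'$ would be red, forcing $w'\in C$ --- a contradiction. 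Hence the red components of $G'$ are exactly the sets $\widehat C$ as $C$ ranges over the red components of $G$, and $|\widehat C|=t|C|$; the same holds for blue and green.

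Finally, since $G$ has no monochromatic component of order at least $n/2$, every monochromatic component $\widehat C$ of $G'$ has order $t|C|<tn/2$, so $G'$ has no monochromatic component covering half of its $tn$ vertices. The only delicate point is the maximality step: a priori the arbitrary colouring of the edges inside the cliques could fuse several of the sets $\widehat C$ into one large component, but the observation that a red edge leaving a clique $\hat w$ forces the corresponding edge $ww'$ of $G$ to be red rules this out. It is exactly here --- and in guaranteeing $|C|\geq 2$, so that each clique is attached to the rest of its component in every colour --- that the hypothesis that every vertex of $G$ meets all three colours is used.
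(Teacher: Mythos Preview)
Your argument is correct and follows the same approach as the paper: both proofs use that every vertex of $G$ is incident to each colour to ensure each clique $\hat v$ lies inside a single red (resp.\ blue, green) component of $G'$, and then identify the monochromatic components of $G'$ with the $t$-fold blow-ups of those of $G$. The paper's proof is simply terser, leaving the connectivity and maximality checks implicit, whereas you spell them out carefully.
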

\begin{proof}
Let $v_1,\dots,v_n$ be the vertices of $G$ and let $V_1,\dots,V_n$ be the corresponding copies of $K_t$ in $G'$. Fix a colour, say red. If $v_i$ has no red edges in $G$, then any red component containing a vertex of $V_i$ lies entirely within $V_i$ in $G'$. If instead $v_i$ is incident with a red edge in $G$, then the set $V_i$ is contained in some red component of $G'$. Furthermore $V_i$ and $V_j$ lie in the same red component if and only if $v_i$ and $v_j$ do. The remaining assertions are immediate.
\end{proof}
\begin{lemma}
\label{lem:2components}
Let $G$ be a graph of order $n$ with $\delta(G)\geq\frac56n$. Suppose that the edges of $G$ are $3$-coloured and there are exactly $2$ red components. Then there exists a monochromatic component of order at least $\frac n2$.
\end{lemma}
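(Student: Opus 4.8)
The plan is short: I would deduce this lemma from Lemma~\ref{lem:allcolours} together with the pigeonhole principle. The underlying idea is that two red components, once we know that between them they cover every vertex, must partition $V(G)$, so the larger of the two is automatically big.

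First I would dispose of the possibility that some vertex $v$ of $G$ is incident to edges of fewer than three colours: in that case Lemma~\ref{lem:allcolours} immediately produces a monochromatic component of order at least $\frac n2$ and there is nothing more to prove. So we may assume from now on that every vertex of $G$ meets all three colours; in particular every vertex is incident to at least one red edge and therefore lies in a red component.

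Under this assumption the two red components $R_1$ and $R_2$ are vertex-disjoint and their union is all of $V(G)$, so $|R_1|+|R_2|=n$. Choosing $i\in\{1,2\}$ with $|R_i|=\max(|R_1|,|R_2|)$ gives $|R_i|\geq\frac n2$, and $R_i$ is a monochromatic (red) component of the required order.

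I do not expect a genuine obstacle here: this is essentially a bookkeeping reduction, and its only delicate point — whether a vertex incident to no red edge should count as a trivial red component — is sidestepped entirely by the appeal to Lemma~\ref{lem:allcolours} in the first step. Applying the same reasoning to blue and to green (and observing that the cases of at most one component of some colour are immediate, either because that colour spans $V(G)$ or because some vertex misses it and Lemma~\ref{lem:allcolours} applies), we may assume for the remainder of the proof of Theorem~\ref{thm:56} that each of the three colour classes has at least three components. This is the structural hypothesis on which the case analysis of the following section, and ultimately the linear programs, will be built.
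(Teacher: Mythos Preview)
Your argument is correct and matches the paper's proof essentially line for line: invoke Lemma~\ref{lem:allcolours} to reduce to the case where every vertex lies in a red component, then apply pigeonhole to the two red components partitioning $V(G)$. The additional paragraph about extending the reasoning to the other colours is commentary rather than proof, but it accurately anticipates how the lemma is used downstream.
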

\begin{proof}
By Lemma \ref{lem:allcolours}, we are done unless every vertex is incident to edges of all three colours. Therefore every vertex is in a red component. As there are only two red components, one of them must cover at least $\frac n2$ of the vertices.
\end{proof}
\begin{lemma}
\label{lem:smallcomponents}
Let $G$ be a graph of order $n$ where the edges are $3$-coloured and there is no monochromatic component of order at least $\frac n2$. Suppose that $G$ has $r$ red, $b$ blue and $g$ green components and there exist red components $R_1$ and $R_2$ such that $|R_1|+|R_2|<\frac n2$. Then there is a graph $G'$ together with a $3$-edge-colouring such that there are $(r-1)$ red, $b$ blue and $g$ green components, $\frac{\delta(G')}{|G'|}\geq\frac{\delta(G)}{|G|}$ and there is no monochromatic component in $G'$ covering at least half of the vertices.
\end{lemma}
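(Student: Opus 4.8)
The plan is to produce $G'$ by merging the two small red components $R_1$ and $R_2$ into a single red component while making only local changes near $R_1\cup R_2$, so that the blue and green components — and the density ratio — are essentially left alone. We keep the vertex set $V(G)$ throughout.

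If there is a non-edge between $R_1$ and $R_2$, add red edges on all such non-edges, so that $R_1$ becomes completely joined to $R_2$ in red. In this case no vertex loses degree (the vertices of $R_1\cup R_2$ gain degree, the rest are unchanged) and nothing is done to the blue or green edges; hence $\delta(G')\ge\delta(G)$ and $|V(G')|=n$, so $\delta(G')/|G'|\ge\delta(G)/|G|$, the blue and green colour classes and their component counts are untouched, and $R_1,R_2$ have become one red component of order $|R_1|+|R_2|<\tfrac n2=\tfrac12|V(G')|$, with all other red components unchanged. Thus $G'$ has $r-1$ red, $b$ blue and $g$ green components and still no monochromatic component of order at least half the vertices, which is exactly what is required.

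It remains to handle the case where $R_1$ and $R_2$ are already completely joined — necessarily by blue and green edges, since they lie in distinct red components. Here we instead recolour a single cross-edge $e$ to red; this adds $e$ to the red graph, merging $R_1$ and $R_2$ just as above, changes no degrees, and leaves green untouched, so the only danger is that deleting $e$ from the blue graph splits a blue component and inflates $b$. To rule this out, choose $e$ so that it is not a bridge of the blue graph: after swapping the names of blue and green if necessary so that blue carries at least half of the $|R_1||R_2|$ cross-edges, the blue bipartite graph between $R_1$ and $R_2$ is dense enough (when $|R_1|,|R_2|$ are not too small) to contain a cycle, and any blue cross-edge on a cycle is a non-bridge; recolouring it keeps every blue component connected, so $b$ is preserved. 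The few small configurations of $|R_1|$ or $|R_2|$ for which this counting fails can be dispatched by direct inspection.

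The main obstacle I anticipate is precisely this last point: guaranteeing, in the completely-joined case, that some cross-edge can be recoloured red without disconnecting a blue (or green) component, i.e. without changing the blue or green component counts. Once that is in hand, the remaining checks — that $\delta(G')/|G'|\ge\delta(G)/|G|$ and that the new red component has order $|R_1|+|R_2|<\tfrac n2$ — are immediate, since we only ever add edges or recolour within $R_1\cup R_2$.
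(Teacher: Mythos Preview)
Your first case, where some non-edge exists between $R_1$ and $R_2$, is essentially the paper's argument: insert a red edge across the gap, degrees only go up, and no other component is touched.

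The completely-joined case, however, has a genuine gap. You want a cross-edge between $R_1$ and $R_2$ that is not a bridge in its own colour class, so that recolouring it red leaves the blue and green component counts intact. Your density count only forces a cycle in the majority-colour bipartite cross-graph once $(|R_1|-2)(|R_2|-2)>2$, and you defer the remaining cases to ``direct inspection.'' But that inspection cannot always succeed. Take $R_1=\{a,b\}$, $R_2=\{c,d\}$ with $ac,bd$ blue and $ad,bc$ green, and arrange the rest of $G$ so that these are the \emph{only} blue and green edges meeting $a,b,c,d$. Then every cross-edge is pendant in its colour class, hence a bridge; recolouring any one of them to red changes the component count of that colour. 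The lemma as stated has no minimum-degree hypothesis, so this is admissible; and even under $\delta(G)\ge\tfrac56n$, when $\min(|R_1|,|R_2|)=2$ the majority colour can be a star between $R_1$ and $R_2$, and whether its edges are bridges then depends on global structure you have no control over. So ``direct inspection'' is not a finite check but a real obstacle.

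The paper avoids all of this with a different device: in the completely-joined case it first passes to the $2$-blow-up $G'$ of $G$. If $u\in R_1$, $v\in R_2$ with $uv$ blue, then in $G'$ the four edges between $\{u_1,u_2\}$ and $\{v_1,v_2\}$ are all blue, so $u_1v_1$ automatically lies on the blue path $u_1v_2u_2v_1$ and can be recoloured red without disconnecting anything. The blow-up gives $\delta(G')=2\delta(G)+1$ and $|G'|=2n$, hence $\delta(G')/|G'|>\delta(G)/|G|$, and the merged red component has order $2(|R_1|+|R_2|)<n=\tfrac12|G'|$. This single trick replaces the open-ended casework your approach would require.
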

\begin{proof}
Note that, by Lemma \ref{lem:allcolours}, we may assume that every vertex is incident to all three colours.

Suppose first that there exists $v_1\in R_1$ and $v_2\in R_2$ such that $v_1v_2\notin E(G)$. Let $G'$ be a copy of $G$ with the additional red edge $v_1v_2$. Then $\delta(G')\geq\delta(G)$ and all components in $G'$ have the same number of vertices as they do in $G$ with the exception of $R_1$ and $R_2$ which form a single component in $G'$. As $|R_1|+|R_2|<\frac n2$, $G'$ contains no monochromatic component covering at least half of the vertices.

Now suppose that every vertex in $R_1$ is connected to every vertex in $R_2$. Fix $u\in R_1$ and $v\in R_2$ and, without loss of generality, assume that the edge $uv$ is blue. Let $G'$ be a $2$-blow-up of $G$ with same $3$-edge-colouring. The vertex $u$ in $G$ corresponds to vertices $u_1$ and $u_2$ in $G'$ and $v$ corresponds to $v_1$ and $v_2$. Change the colour of the edge $u_1v_1$ from blue to red.

The red components corresponding to $R_1$ and $R_2$ in $G'$ now form a single component of order $2(|R_1|+|R_2|)<n=\frac12|G'|$. The vertices $u_1$ and $v_1$ still lie in the same blue component via the blue path $u_1v_2u_2v_1$ and so changing the colour of the edge $u_1v_1$ does not change the orders of the other components in $G'$. By Lemma \ref{lem:blowup}, we have $\delta(G')=2\delta(G)+1$ and $G'$ does not contain a monochromatic component of order at least $\frac12|G'|$. As $|G'|=2|G|$, it follows that $\frac{\delta(G')}{|G'|}>\frac{\delta(G)}{|G|}$.
\end{proof}
\noindent
Lemma \ref{lem:2components} and Lemma \ref{lem:smallcomponents} allow us to make the following assumption: in each colour, $G$ either has $3$ components or $4$ components each of order exactly $\frac n4$.
\begin{lemma}
\label{lem:onlyonecolourwith4}
Let $G$ be a graph of order $n$ where the edges of $G$ are $3$-coloured. Suppose that $G$ has no monochromatic component of order at least $\frac n2$. If, in two colours, there are $4$ components of order exactly $\frac n4$ in that colour, then $\delta(G)<\frac56n$.
\end{lemma}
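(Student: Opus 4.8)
The plan is to argue by contradiction: suppose $\delta(G)\ge\frac56 n$, and produce a monochromatic component of order at least $\frac n2$. After relabelling, let red and blue be the two colours each having exactly $4$ components of order $\frac n4$; since $4\cdot\frac n4=n$, these $4$ red components partition $V(G)$, and so do the $4$ blue components. For $v\in V(G)$, write $R(v)$ and $B(v)$ for the red and blue components containing $v$, so $|R(v)|=|B(v)|=\frac n4$ and $v\in R(v)\cap B(v)$. The crucial point is that every edge at $v$ that is not green is either red, hence lies inside $R(v)$, or blue, hence lies inside $B(v)$; in particular its other endpoint lies in $R(v)\cup B(v)\setminus\{v\}$, a set of size $|R(v)|+|B(v)|-|R(v)\cap B(v)|-1\le\frac n2-2$.

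Therefore $v$ has at least $\delta(G)-\bigl(\frac n2-2\bigr)\ge\frac n3+2$ green neighbours, and these together with $v$ all lie in one green component; hence every green component has order at least $\frac n3+3>\frac n3$. Since every vertex has a green edge, the green components cover $V(G)$, and since three pairwise-disjoint sets each of order greater than $\frac n3$ cannot fit in $V(G)$, there are at most two green components. A single green component would have order $n\ge\frac n2$; two green components would force the larger to have order at least $\frac n2$. Either way we contradict the hypothesis that $G$ has no monochromatic component of order at least $\frac n2$, so $\delta(G)<\frac56 n$.

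I do not expect any real obstacle: the content is the observation that a non-green edge at $v$ is confined to a red or blue component of order $\frac n4$, after which everything is a single degree count. The only things to handle carefully are that the two families of four monochromatic components genuinely partition $V(G)$ (immediate from $4\cdot\frac n4=n$) and that the resulting lower bound $\frac n3+3$ is \emph{strictly} above $\frac n3$, so that a third green component is excluded. (One could instead invoke Lemma~\ref{lem:allcolours} to see that every vertex meets all three colours, but the degree count already supplies a green edge at each vertex, so this is unnecessary.)
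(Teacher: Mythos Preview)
Your proof is correct and is essentially the contrapositive of the paper's argument: the paper invokes Lemmas~\ref{lem:allcolours} and~\ref{lem:2components} to get at least three green components, picks $v$ in the smallest one, and bounds $d(v)\le(\tfrac n3-1)+2(\tfrac n4-1)<\tfrac56 n$; you instead lower-bound the green degree of every vertex and deduce there are at most two green components. The underlying inequality is the same, though your version is self-contained and avoids appealing to the earlier lemmas.
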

\begin{proof}
Without loss of generality, suppose $G$ has 4 red components and 4 blue components, each with order exactly $\frac n4$. By Lemma \ref{lem:allcolours}, every vertex lies in components of all three colours. Lemma \ref{lem:2components} tells us that there must be at least $3$ green components. The smallest green component has order at most $\frac n3$. Choose a vertex $v$ in the smallest green component. Then we find that $d(v)\leq(\frac n3-1)+2(\frac n4-1)<\frac56n$.
\end{proof}
\noindent
The above lemmas allow us to make the following assumptions about $G$:
\begin{itemize}
\item $G$ is a graph with $n$ vertices, minimum degree at least $\frac56n$ and no monochromatic component of order at least $\frac n2$.
\item Every vertex is incident to an edge of every colour (Lemma \ref{lem:allcolours}).
\item There are either $3$ components in each colour or there are $3$ components in two colours and $4$ components of order $\frac n4$ in the third colour (Lemmas \ref{lem:2components}, \ref{lem:smallcomponents} and \ref{lem:onlyonecolourwith4}).
\end{itemize}

\section{Linear programs for the upper bound}
\label{sec:lp}
In Section \ref{sec:upper}, we reduced the proof of Theorem \ref{thm:56} to the following two cases:
\begin{enumerate}
\item Every vertex is incident to an edge of every colour. There are $3$ components of each colour.
\item Every vertex is incident to an edge of every colour. There are $3$ components in two of the colours (without loss of generality, red and blue) and $4$ components of order exactly $\frac n4$ in the third colour (without loss of generality, green).
\end{enumerate}
We now formulate these cases as collections of linear programs. More details about the code used to implement these linear programs may be found in Appendix \ref{sec:lpcode}.

\subsection{Three components in each colour}
We begin by considering the first case where there are $3$ components in each of the three colours. Let the vertex sets of the red components be $R_i$, the blue components be $B_j$ and the green components be $G_k$ for $i,j,k\in[3]$. We know that every vertex $v$ of $G$ lies in the intersection $R_i\cap B_j\cap G_k$ for some $i,j,k\in[3]$ and so $d(v)\leq|R_i\cup B_j\cup G_k|-1$ with equality if $v$ is adjacent to every vertex in $R_i\cup B_j\cup G_k$. Proving Theorem \ref{thm:56} for the first case is equivalent to showing that Question \ref{qu:n} has an answer of $\alpha<\frac56$.
\begin{question}
\label{qu:n}
What is the maximum value of $\alpha$ such that the following conditions can hold simultaneously?
\begin{enumerate}
\item $|R_i|,|B_j|,|G_k|<\frac n2\qquad\forall i,j,k\in[3]$
\item $d(v)\geq\alpha n\qquad\forall v\in V(G)$
\end{enumerate}
\end{question}
\noindent
For any $v\in R_i\cap B_j\cap G_k$, the addition of any missing edges between $v$ and $R_i\cup B_j\cup G_k$ will not change the number of vertices in each monochromatic component but may increase $\delta(G)$ and hence $\alpha$. Therefore we may assume that $d(v)=|R_i\cup B_j\cup G_k|-1$ for every vertex $v\in R_i\cap B_j\cap G_k$ (although it is important to note that $R_i\cap B_j\cap G_k$ may be empty).

We can avoid dependence on $n$ by rescaling. Let $x_{ijk}=\frac1n|R_i\cap B_j\cap G_k|$ for each $i,j,k\in[3]$. For fixed $i\in[3]$, we find that $|R_i|=n\sum_{j=1}^3\sum_{k=1}^3x_{ijk}$ (and similarly for $|B_j|$ and $|G_k|$) and, for fixed $i,j,k\in[3]$, we have
\begin{equation*}
\frac1n|R_i\cup B_j\cup G_k|=\sum_{\substack{i',~j',~k'\\i'=i\text{ or }j'=j\text{ or }k'=k}}x_{i'j'k'}.
\end{equation*}
Using this notation and dividing through by $n$, the first condition in Question \ref{qu:n} becomes:
\begin{align*}
\sum_{j=1}^3\sum_{k=1}^3x_{ijk}&<\frac 12\qquad\qquad\forall i\in[3]\\
\sum_{i=1}^3\sum_{k=1}^3x_{ijk}&<\frac 12\qquad\qquad\forall j\in[3]\\
\sum_{i=1}^3\sum_{j=1}^3x_{ijk}&<\frac 12\qquad\qquad\forall k\in[3]
\end{align*}
In linear programs, the condition statements must consist of weak, rather than strict, inequalities in order to guarantee that the space of feasible solutions is closed and, if this space is non-empty, that an optimal solution exists. Relaxing the above conditions to allow equality will increase the space of feasible solutions. As the maximum value of $\alpha$ for our original problem will be at most the maximum value of $\alpha$ for the relaxed problem, showing that $\alpha<\frac56$ for the relaxed problem is sufficient to prove Theorem \ref{thm:56}.

The second condition holds whenever $R_i\cap B_j\cap G_k$ is non-empty. We obtain:
\begin{equation*}
\begin{cases}
\displaystyle\sum_{\substack{i',~j',~k'\\i'=i\text{ or }j'=j\text{ or }k'=k}}x_{i'j'k'}\geq\alpha+\frac1n&\text{if }R_i\cap B_j\cap G_k\neq\emptyset\\
\displaystyle\qquad\quad\qquad x_{ijk}=0&\text{otherwise.}
\end{cases}
\end{equation*}
We would like to remove the dependence on $n$ completely. We therefore relax the second condition by removing the $\frac1n$; we will obtain an upper bound of $\alpha\leq\frac56$ which is sufficient to prove Theorem \ref{thm:56}.

We encode whether $R_i\cap B_j\cap G_k$ is empty with an additional variable $y_{ijk}$ by setting:
\begin{equation*}
y_{ijk}=
\begin{cases}
1&\text{if }R_i\cap B_j\cap G_k\neq\emptyset\\
0&\text{otherwise.}
\end{cases}
\end{equation*}
The variables $y_{ijk}$ represent the pattern of intersections. For a fixed intersection pattern, an upper bound on $\alpha$ in Question \ref{qu:n} can be found by solving Linear Program \ref{lp:333}.

\begin{Linear Program}[ht]
\centering
\begin{align*}
&\textbf{maximise }&\alpha&&&\\
&\textbf{subject to }&\sum_{i=1}^3\sum_{j=1}^3\sum_{k=1}^3x_{ijk}&=1&&\\
&&\sum_{j=1}^3\sum_{k=1}^3x_{ijk}&\leq\frac12&\forall i&\in[3]\\
&&\sum_{i=1}^3\sum_{k=1}^3x_{ijk}&\leq\frac12&\forall j&\in[3]\\
&&\sum_{i=1}^3\sum_{j=1}^3x_{ijk}&\leq\frac12&\forall k&\in[3]\\
&&\sum_{\substack{i',~j',~k'\\i'=i\text{ or }j'=j\text{ or }k'=k}}x_{i'j'k'}-\alpha&\geq0&\forall i,j,k&\in[3]\text{ such that }y_{ijk}=1\\
&&x_{ijk}&=0&\forall i,j,k&\in[3]\text{ such that } y_{ijk}=0\\
&&x_{ijk}&\geq0&\forall i,j,k&\in[3]
\end{align*}
\caption{3 red, 3 blue and 3 green components.}
\label{lp:333}
\end{Linear Program}
\noindent
We have already assumed that there are $3$ components of each colour or else we would be done by Lemma \ref{lem:2components}. Therefore, we only need to consider intersection patterns that satisfy the following condition: for each $i\in[3]$, there exists $j,k\in[3]$ such that $y_{ijk}\neq0$ (and similarly for $j$ and $k$).

Using a computer, we ran the linear program for all valid intersection patterns (roughly $2^{27}$ linear programs were run) and found the maximum overall value of $\alpha$ and the optimal solutions corresponding to this value of $\alpha$. The maximum value was $\alpha=\frac56$.

All of the optimal solutions were subgraphs of the following graph: the vertices are divided equally into six vertex sets. The edges within vertex sets are coloured arbitrarily and the edges between vertex classes are coloured:
\begin{itemize}
\item Red: edges between $V_1$ and $V_5$ and edges between $V_2$ and $V_4$
\item Blue: edges between $V_2$ and $V_6$ and edges between $V_3$ and $V_5$
\item Green: edges between $V_1$ and $V_3$ and edges between $V_4$ and $V_6$
\item Red or blue: edges between $V_1$ and $V_6$ and edges between $V_3$ and $V_4$
\item Red or green: edges between $V_2$ and $V_3$ and edges between $V_5$ and $V_6$
\item Blue or green: edges between $V_1$ and $V_2$ and edges between $V_4$ and $V_5$.
\end{itemize}

This graph is shown in Figure \ref{fig:opt}. In the figure, each circle represents a vertex set, each containing approximately the same number of vertices. The lines show the colours of the edges between vertex sets. Where the line between two circles is striped, the edges between these two vertex sets may be either of the two colours indicated (or a mixture of the two).

\begin{figure}[!h]
\centering
\begin{tikzpicture}
\draw (3.5,0) node (v1) {};
\draw (1.5,-3) node (v2) {};
\draw (-1.5,-3) node (v3) {};
\draw (-3.5,0) node (v4) {};
\draw (-1.5,3) node (v5) {};
\draw (1.5,3) node (v6) {};

\draw[red, ultra thick] (v1) to (v5);
\draw[red, ultra thick] (v1) to (v6);
\draw[red, ultra thick] (v5) to (v6);
\draw[red, ultra thick] (v2) to (v3);
\draw[red, ultra thick] (v2) to (v4);
\draw[red, ultra thick] (v3) to (v4);

\draw[blue, ultra thick] (v1) to (v2);
\draw[blue, ultra thick, dash pattern=on 5pt off 5pt] (v1) to (v6);
\draw[blue, ultra thick] (v2) to (v6);
\draw[blue, ultra thick, dash pattern=on 5pt off 5pt] (v3) to (v4);
\draw[blue, ultra thick] (v3) to (v5);
\draw[blue, ultra thick] (v4) to (v5);

\draw[green, ultra thick, dash pattern=on 5pt off 5pt] (v1) to (v2);
\draw[green, ultra thick] (v1) to (v3);
\draw[green, ultra thick, dash pattern=on 5pt off 5pt] (v2) to (v3);
\draw[green, ultra thick, dash pattern=on 5pt off 5pt] (v4) to (v5);
\draw[green, ultra thick] (v4) to (v6);
\draw[green, ultra thick, dash pattern=on 5pt off 5pt] (v5) to (v6);
	
\draw[thick, fill=white] (v1) circle (0.5cm) node {$V_1$};
\draw[thick, fill=white] (v2) circle (0.5cm) node {$V_2$};
\draw[thick, fill=white] (v3) circle (0.5cm) node {$V_3$};
\draw[thick, fill=white] (v4) circle (0.5cm) node {$V_4$};
\draw[thick, fill=white] (v5) circle (0.5cm) node {$V_5$};
\draw[thick, fill=white] (v6) circle (0.5cm) node {$V_6$};
\end{tikzpicture}
\caption{Optimal solutions of Linear Program \ref{lp:333} are subgraphs of this graph.}
\label{fig:opt}
\end{figure}
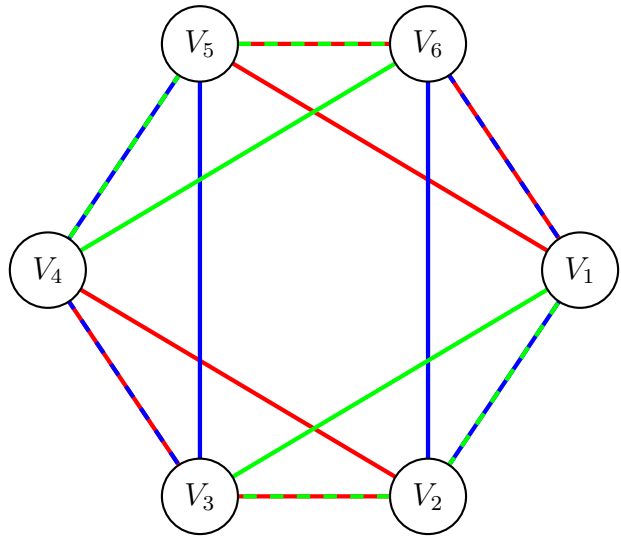

\subsection{Three components in two colours; four in the third}
Now we consider the case where there are $3$ red, $3$ blue and $4$ green components and each green component has order exactly $\frac n4$. The set-up is very similar to the case above. For a given intersection pattern $y_{ijk}$, the only difference in the linear program is the condition that each green component has size exactly $\frac14$ (rather than just being at most $\frac12$).
\begin{Linear Program}[H]
\centering
\begin{align*}
&\textbf{maximise }&\alpha&&&\\
&\textbf{subject to }&\sum_{i=1}^3\sum_{j=1}^3\sum_{k=1}^4x_{ijk}&=1&&\\
&&\sum_{j=1}^3\sum_{k=1}^4x_{ijk}&\leq\frac12&\forall i&\in[3]\\
&&\sum_{i=1}^3\sum_{k=1}^4x_{ijk}&\leq\frac12&\forall j&\in[3]\\
&&\sum_{i=1}^3\sum_{j=1}^3x_{ijk}&=\frac14&\forall k&\in[4]\\
&&\sum_{\substack{i',~j',~k'\\i'=i\text{ or }j'=j\text{ or }k'=k}}x_{i'j'k'}-\alpha&\geq0&\forall i,j&\in[3],k\in[4]\text{ such that }y_{ijk}=1\\
&&x_{ijk}&=0&\forall i,j&\in[3],k\in[4]\text{ such that }y_{ijk}=0\\
&&x_{ijk}&\geq0&\forall i,j&\in[3],k\in[4]
\end{align*}
\caption{3 red, 3 blue and 4 green components.}
\label{lp:334}
\end{Linear Program}
\noindent
As above, we only considered intersection patterns where, for each $i\in[3]$, there exists $j\in[3],k\in[4]$ such that $y_{ijk}\neq0$ (and similarly for $j$ and $k$). Using a computer, we ran the linear program for all valid intersection patterns ($\approx2^{30}$ linear programs were run) and found that the maximum overall value of $\alpha$ was $\frac34$. As $\frac34$ is strictly smaller than $\frac56$, we may conclude that Theorem \ref{thm:56} holds in the case where there are four components in one colour and three components in the other two colours.

\section{Proof of lower bound}
\label{sec:lower}
In this section, we give constructions to prove the lower bound given in Theorem \ref{thm:counter3}.
\begin{proof}[Proof of Theorem \ref{thm:counter3}]
We will give a separate construction for each residue class modulo 6, beginning with the case where $n=6q$ for some $q\in\mathbb{N}$. In each construction, the edges inside vertex classes can be coloured arbitrarily and so we will only specify the colours of edges between vertex classes.
\\
\\
\textit{Case: $n=6q$.} We will construct a graph $G$ of order $n$ with $\delta(G)=\left\lfloor\frac56n\right\rfloor-2$. We will also show that there is a $3$-colouring of the edges of $G$ such that every monochromatic component has order strictly less than $\frac n2$. The colours will be red, blue and green.
	
Partition the vertices into $6$ sets $V_1,\dots,V_6$ with the following sizes:
\begin{itemize}
\item $|V_1|=|V_3|=|V_5|=q-1$
\item $|V_2|=|V_4|=|V_6|=q+1$.
\end{itemize}
Observe that $|V_1|+\dots+|V_6|=6q=n$. There are no edges between:
\begin{itemize}
\item $V_1$ and $V_4$
\item $V_2$ and $V_5$
\item $V_3$ and $V_6$.
\end{itemize}
All other edges are present (including all edges inside vertex classes). This means that each vertex in $V_1\cup V_3\cup V_5$ has degree $5q-2$ and each vertex in $V_2\cup V_4\cup V_6$ has degree $5q$. Therefore $\delta(G)=5q-2=\left\lfloor\frac56n\right\rfloor-2$ as required.
	
It remains to construct a $3$-colouring of the edges in which every monochromatic component has order strictly less than $\frac n2$. We colour edges between vertex classes as follows:
\begin{itemize}
\item Red: edges between $V_2$ and $V_6$ and edges between $V_3$, $V_4$ and $V_5$
\item Blue: edges between $V_2$ and $V_4$ and edges between $V_1$, $V_5$ and $V_6$
\item Green: edges between $V_4$ and $V_6$ and edges between $V_1$, $V_2$ and $V_3$.
\end{itemize}
$G$ is the graph defined by this colouring (see Figure \ref{fig:counter0}).
\begin{figure}[!h]
\centering
\begin{tikzpicture}
\draw (3.5,0) node (v1) {};
\draw (1.5,-3) node (v2) {};
\draw (-1.5,-3) node (v3) {};
\draw (-3.5,0) node (v4) {};
\draw (-1.5,3) node (v5) {};
\draw (1.5,3) node (v6) {};

\draw[red, ultra thick] (v2) to (v6);
\draw[red, ultra thick] (v3) to (v4);
\draw[red, ultra thick] (v3) to (v5);
\draw[red, ultra thick] (v4) to (v5);
\draw[blue, ultra thick] (v2) to (v4);
\draw[blue, ultra thick] (v1) to (v5);
\draw[blue, ultra thick] (v1) to (v6);
\draw[blue, ultra thick] (v5) to (v6);
\draw[green, ultra thick] (v4) to (v6);
\draw[green, ultra thick] (v1) to (v2);
\draw[green, ultra thick] (v1) to (v3);
\draw[green, ultra thick] (v2) to (v3);

\draw[thick, fill=white] (v1) circle (0.5cm) node {$V_1$};
\draw[thick, fill=white] (v2) circle (0.5cm) node {$V_2$};
\draw[thick, fill=white] (v3) circle (0.5cm) node {$V_3$};
\draw[thick, fill=white] (v4) circle (0.5cm) node {$V_4$};
\draw[thick, fill=white] (v5) circle (0.5cm) node {$V_5$};
\draw[thick, fill=white] (v6) circle (0.5cm) node {$V_6$};
\end{tikzpicture}
\caption{The graph $G$ for $n=6q$.}
\label{fig:counter0}
\end{figure}
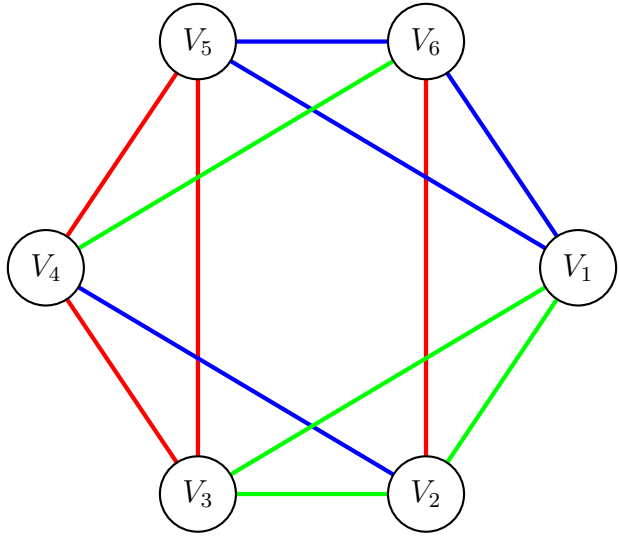
	
In this colouring, the two largest red components (the components which contain contain vertices from at least two vertex classes) have orders $2q+2$ and $3q-1$. Similarly, the two largest blue components have orders $2q+2$ and $3q-1$ and the two largest green components have orders $2q+2$ and $3q-1$. As $\frac n2=3q$, all monochromatic components have order strictly less than $\frac n2$ as required.
	
For the remaining residue classes modulo $6$, we construct similar graphs with $\delta(G)=\left\lfloor\frac56n\right\rfloor-c$ for some constant $c\in\{1,2\}$ that depends on the residue class.
\\
\\
\textit{Case: $n=6q+1$.} Partition the vertices into $6$ sets.
\begin{itemize}
\item $|V_1|=|V_2|=|V_3|=q+1$
\item $|V_4|=q$
\item $|V_5|=|V_6|=q-1$.
\end{itemize}
We colour edges in the same way as in Figure \ref{fig:counter0} to get the graph $G$. (Note that the number of vertices in each set is different from the case where $n\equiv0\mod6$.) The largest monochromatic component has order $3q<\frac n2$ and $\delta(G)=5q-1=\left\lfloor\frac56n\right\rfloor-1$.
\\
\\
\textit{Case: $n=6q+2$.} Partition the vertices into $8$ sets.
\begin{multicols}{2}
\begin{itemize}
\item $|V_1|=q-4$
\item $|V_2|=|V_3|=|V_5|=|V_6|=q+1$
\item $|V_4|=q-2$
\item $|V_7|=|V_8|=2$.
\end{itemize}
\end{multicols}
\noindent
We colour edges between vertex classes as follows to get the graph $G$ (see Figure \ref{fig:counter2}):
\begin{itemize}
\item Red: edges between $V_2$ and $V_6$, edges between $V_3$, $V_4$ and $V_5$ and edges between $V_7$ and $V_1\cup V_2\cup V_6$
\item Blue: edges between $V_2$ and $V_4$, edges between $V_3$ and $V_7$ and edges between $V_1$, $V_5$, $V_6$ and $V_8$
\item Green: edges between $V_1$, $V_2$ and $V_3$, edges between $V_8$ and $V_2\cup V_3$, edges between $V_4$ and $V_6$ and edges between $V_5$ and $V_7$.
\end{itemize}
Note that this example still holds if the colours of some or all of the edges between certain vertex classes are changed - edges between $V_1$ and $V_8$ may also be green and edges between $V_1$ and $V_2\cup V_6$ may also be red.
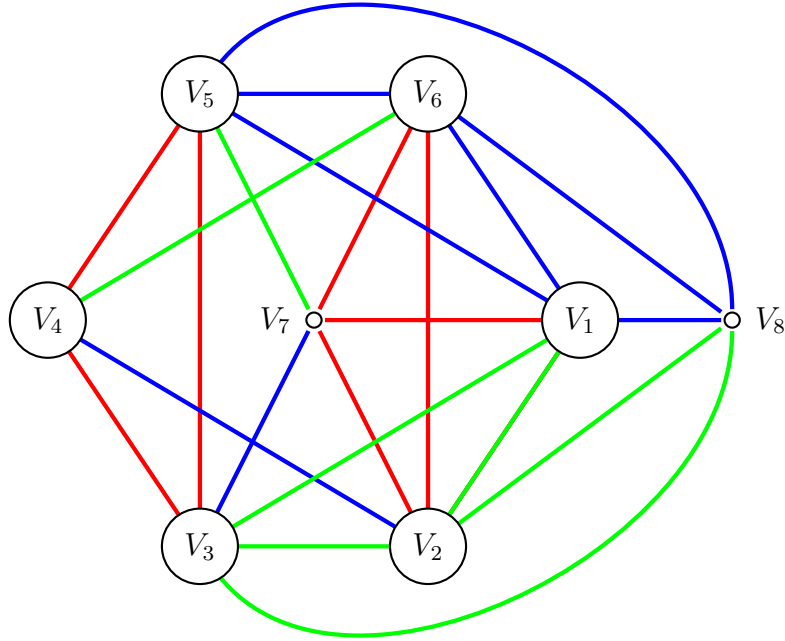
\begin{figure}[!h]
\centering
\begin{tikzpicture}
\draw[use as bounding box, white] (-4,-4.2) -| (4,-4.2) |- (4,4.2) -| (-4,4.2);
\draw (3.5,0) node (v1) {};
\draw (1.5,-3) node (v2) {};
\draw (-1.5,-3) node (v3) {};
\draw (-3.5,0) node (v4) {};
\draw (-1.5,3) node (v5) {};
\draw (1.5,3) node (v6) {};
\draw (0,0) node (v7) {};
\draw (5.5,0) node (v8) {};

\draw[red, ultra thick] (v7) to (v1);
\draw[red, ultra thick] (v7) to (v6);
\draw[red, ultra thick] (v7) to (v2);
\draw[red, ultra thick] (v6) to (v2);
\draw[red, ultra thick] (v4) to (v5);
\draw[red, ultra thick] (v4) to (v3);
\draw[red, ultra thick] (v5) to (v3);
\draw[red, ultra thick] (v1) to (v2);
\draw[blue, ultra thick] (v8) to (v1);
\draw[blue, ultra thick] (v8) to (v6);
\draw[blue, ultra thick] (v8) to[out=90,in=60] (v5);
\draw[blue, ultra thick] (v1) to (v6);
\draw[blue, ultra thick] (v1) to (v5);
\draw[blue, ultra thick] (v6) to (v5);
\draw[blue, ultra thick] (v7) to (v3);
\draw[blue, ultra thick] (v4) to (v2);
\draw[green, ultra thick] (v8) to (v2);
\draw[green, ultra thick] (v8) to[out=-90,in=-60] (v3);
\draw[green, ultra thick] (v1) to (v2);
\draw[green, ultra thick] (v1) to (v3);
\draw[green, ultra thick] (v2) to (v3);
\draw[green, ultra thick] (v4) to (v6);
\draw[green, ultra thick] (v7) to (v5);

\draw[thick, fill=white] (v1) circle (0.5cm) node {$V_1$};
\draw[thick, fill=white] (v2) circle (0.5cm) node {$V_2$};
\draw[thick, fill=white] (v3) circle (0.5cm) node {$V_3$};
\draw[thick, fill=white] (v4) circle (0.5cm) node {$V_4$};
\draw[thick, fill=white] (v5) circle (0.5cm) node {$V_5$};
\draw[thick, fill=white] (v6) circle (0.5cm) node {$V_6$};
\draw[thick, fill=white] (v7) circle (0.1cm) node[label={left:$V_7$}] {};
\draw[thick, fill=white] (v8) circle (0.1cm) node[label={right:$V_8$}] {};
\end{tikzpicture}
\caption{The graph $G$ for $n=6q+2$.}
\label{fig:counter2}
\end{figure}
	
We see that the largest monochromatic component has order $3q=\frac n2-1$ and $\delta(G)=5q=\left\lfloor\frac56n\right\rfloor-1$.
\\
\\
\textit{Case: $n=6q+3$.} Partition the vertices into $6$ sets.
\begin{itemize}
\item $|V_1|=|V_2|=|V_3|=q+1$
\item $|V_4|=|V_5|=|V_6|=q$.
\end{itemize}
We colour edges between the vertex classes as in Figure \ref{fig:counter0} to get the graph $G$. We see that the largest monochromatic component has order $3q+1<\frac n2$ and $\delta(G)=5q+1=\left\lfloor\frac56n\right\rfloor-1$.
\\
\\
\textit{Case: $n=6q+4$}
Partition the vertices into $8$ sets.
\begin{multicols}{2}
\begin{itemize}
\item $|V_1|=q-3$
\item $|V_2|=|V_3|=|V_5|=|V_6|=q+1$
\item $|V_4|=q-1$
\item $|V_7|=|V_8|=2$.
\end{itemize}
\end{multicols}
We colour the edges between the vertex classes as in Figure \ref{fig:counter2} to get the graph $G$. We see that the largest monochromatic component has order $3q+1<\frac n2$ and $\delta(G)=5q+2=\left\lfloor\frac56n\right\rfloor-1$.
\\
\\
\textit{Case: $n=6q+5$.} Partition the vertices into $8$ sets.
\begin{multicols}{2}
\raggedcolumns
\interlinepenalty=10000
\begin{itemize}
\item $|V_1|=q-1$
\item $|V_2|=|V_3|=|V_5|=|V_6|=q+1$
\item $|V_4|=q$
\item $|V_7|=1$
\item $|V_8|=1$
\end{itemize}
\end{multicols}
We colour the edges between vertex classes as in Figure \ref{fig:counter2} to get the graph $G$. We see that the largest monochromatic component has order $3q+2<\frac n2$ and $\delta(G)=5q+3=\left\lfloor\frac56n\right\rfloor-1$.
\end{proof}
\noindent
It is worth noting that, as $n\rightarrow\infty$, the graphs shown in Figure \ref{fig:counter0} (corresponding to the cases where $n\equiv0,1,3\mod6$) and Figure \ref{fig:counter2} (where $n\equiv2,4,5\mod6$) are close to the graph in Figure \ref{fig:limit}.
\begin{figure}[!h]
\centering
\begin{tikzpicture}
\draw (3.5,0) node (v1) {};
\draw (1.5,-3) node (v2) {};
\draw (-1.5,-3) node (v3) {};
\draw (-3.5,0) node (v4) {};
\draw (-1.5,3) node (v5) {};
\draw (1.5,3) node (v6) {};

\draw[red, ultra thick] (v4) to (v3);
\draw[red, ultra thick] (v4) to (v5);
\draw[red, ultra thick] (v3) to (v5);
\draw[red, ultra thick] (v2) to (v6);
\draw[blue, ultra thick] (v4) to (v2);
\draw[blue, ultra thick] (v5) to (v6);
\draw[blue, ultra thick] (v5) to (v1);
\draw[blue, ultra thick] (v6) to (v1);
\draw[green, ultra thick] (v4) to (v6);
\draw[green, ultra thick] (v2) to (v1);
\draw[green, ultra thick] (v2) to (v3);
\draw[green, ultra thick] (v3) to (v1);

\draw[thick, fill=white] (v1) circle (0.5cm) node {$V_1$};
\draw[thick, fill=white] (v2) circle (0.5cm) node {$V_2$};
\draw[thick, fill=white] (v3) circle (0.5cm) node {$V_3$};
\draw[thick, fill=white] (v4) circle (0.5cm) node {$V_4$};
\draw[thick, fill=white] (v5) circle (0.5cm) node {$V_5$};
\draw[thick, fill=white] (v6) circle (0.5cm) node {$V_6$};
\end{tikzpicture}
\caption{The limit graph of G for for all residue classes modulo 6.}
\label{fig:limit}
\end{figure}
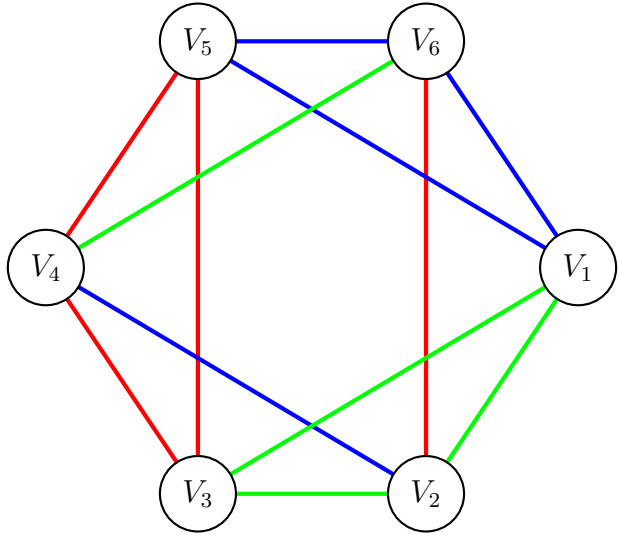
This graph has six vertex classes of equal size with the edges are coloured as stated below and is one of the optimal cases found in Section \ref{sec:lp}.
\begin{itemize}
\item Red: edges between $V_2$ and $V_6$ and edges between $V_3$, $V_4$ and $V_5$.
\item Blue: edges between $V_2$ and $V_4$ and edges between $V_1$, $V_5$ and $V_6$.
\item Green: edges between $V_4$ and $V_6$ and edges between $V_1$, $V_2$ and $V_3$.
\item Red, blue or green: edges within vertex classes.
\end{itemize}

The graphs given in the proof of Theorem \ref{thm:counter3} are by no means unique. For each residue class, we can find other graphs $G'$ with $\delta(G')=\left\lfloor\frac56n\right\rfloor-c$ for some small $c$ that have no monochromatic component covering half of the vertices. Indeed, for each of the optimal graphs found in Section \ref{sec:lp}, it is possible to find some such $G'$ which is close to it as $n\rightarrow\infty$.

It is also worth remarking that the constructions used in the proof of Theorem \ref{thm:counter3} are sharp in some cases. Suppose that there is a graph $G$ on $n$ vertices with no monochromatic component of order $\frac n2$ and $\delta(G)=\frac56n-a$ for some $a$. Theorem \ref{thm:56} tells us that $a>0$. If $G'$ is a $t$-blow-up of $G$, then $G'$ has no monochromatic component of order $\frac{tn}2$. By Lemma \ref{lem:blowup}, we find
\begin{align*}
\delta(G')&=t\delta(G)+(t-1)\\
&=\frac56nt-at+(t-1)
\end{align*}
and Theorem \ref{thm:56} tells us that $\delta(G')<\frac56tn$. Combining these inequalities gives $a>1-\frac1t$ for every $t\in\mathbb{N}$ and so $a\geq1$. Hence it follows that $\delta(G)\leq\left\lfloor\frac56n-1\right\rfloor=\left\lfloor\frac56n\right\rfloor-1$. In the proof of Theorem \ref{thm:counter3}, the graphs given for $n\equiv1,2,3,4,5\mod6$ each had minimum degree $\left\lfloor\frac56n\right\rfloor-1$.

\section{Counterexamples for infinitely many $k$}
\label{sec:counterk}
In this section, we will disprove Conjecture \ref{conj:main} for all prime powers $k\geq3$. (We initially constructed a counterexample only for $k=3$.  Building on an idea of DeBiasio \cite{D19}, we subsequently extended the construction to infinitely many $k$\footnote{DeBiasio and Krueger independently went on to produce a set of counterexamples \cite{DK20}.}.)

The affine plane of order $k$ is the decomposition of the edges of $K_{k^2}$ into $k+1$ families of $k$ vertex-disjoint $K_k$.  Affine planes exist whenever $k$ is a prime power.  Given an affine plane, we may colour the edges of each family with a different colour. Figure \ref{fig:affine} shows the affine plane for $k=3$, with the vertex families being:
\begin{itemize}
\item Red: $\{v_1, v_2, v_3\}$, $\{v_4, v_5, v_6\}$ and $\{v_7, v_8, v_9\}$
\item Blue: $\{v_1, v_4, v_7\}$, $\{v_2, v_5, v_8\}$ and $\{v_3, v_6, v_9\}$
\item Green: $\{v_1, v_6, v_8\}$, $\{v_2, v_4, v_9\}$ and $\{v_3, v_5, v_7\}$
\item Yellow: $\{v_1, v_5, v_9\}$, $\{v_2, v_6, v_7\}$ and $\{v_3, v_4, v_8\}$.
\end{itemize}
Note that, for clarity, not all edges between vertices in the same vertex set are shown. For instance, the red edges between $v_1$ and $v_3$ are missing from Figure \ref{fig:affine}.
\begin{figure}[!htbp]
\centering
\begin{tikzpicture}
\node[main node, label={90:$v_1$}] (v1) at (-2,2) {};
\node[main node] (v2) at (0,2) {};
\node[main node, label={90:$v_3$}] (v3) at (2,2) {};
\node[main node, label={180:$v_4$}] (v4) at (-2,0) {};
\node[main node] (v5) at (0,0) {};
\node[main node, label={0:$v_6$}] (v6) at (2,0) {};
\node[main node, label={-90:$v_7$}] (v7) at (-2,-2) {};
\node[main node] (v8) at (0,-2) {};
\node[main node, label={-90:$v_9$}] (v9) at (2,-2) {};

\draw[red, ultra thick] (v1) to (v2);
\draw[red, ultra thick] (v2) to (v3);
\draw[red, ultra thick] (v4) to (v5);
\draw[red, ultra thick] (v5) to (v6);
\draw[red, ultra thick] (v7) to (v8);
\draw[red, ultra thick] (v8) to (v9);

\draw[blue, ultra thick] (v1) to (v4);
\draw[blue, ultra thick] (v4) to (v7);
\draw[blue, ultra thick] (v2) to (v5);
\draw[blue, ultra thick] (v5) to (v8);
\draw[blue, ultra thick] (v3) to (v6);
\draw[blue, ultra thick] (v6) to (v9);

\draw[green, ultra thick] (v2) to (v4);
\draw[green, ultra thick] (v2) to[out=60,in=150] (2.5,2.5) to[out=-30,in=30](v9);
\draw[green, ultra thick] (v3) to (v5);
\draw[green, ultra thick] (v5) to (v7);
\draw[green, ultra thick] (v1) to[out=-150,in=150] (-2.5,-2.5) to[out=-30,in=-120] (v8);
\draw[green, ultra thick] (v6) to (v8);

\draw[yellow, ultra thick] (v1) to (v5);
\draw[yellow, ultra thick] (v5) to (v9);
\draw[yellow, ultra thick] (v2) to (v6);
\draw[yellow, ultra thick] (v2) to[out=120,in=30] (-2.5,2.5) to[out=-150,in=150] (v7);
\draw[yellow, ultra thick] (v3) to[out=-30,in=30] (2.5,-2.5) to[out=-150,in=-60] (v8);
\draw[yellow, ultra thick] (v4) to (v8);

\node[main node, label={90:$v_2$}] () at (v2) {};
\node[main node, label={135:$v_5$}] () at (v5) {};
\node[main node, label={-90:$v_8$}] () at (v8) {};
\end{tikzpicture}
\caption{Affine plane when $k=3$.}
\label{fig:affine}
\end{figure}
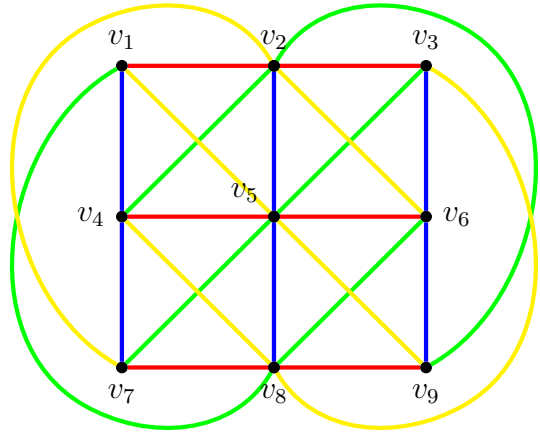

Let $\mathcal{F}$ be one of these families of edges. If we delete the edges of $\mathcal{F}$, then each of the vertices has $k^2-k$ neighbours and the edges are coloured using exactly $k$ different colours. Let $H$ be this graph together with the edge-colouring. For example, in Figure \ref{fig:affinenoyellow} for $k=3$, $\mathcal{F}$ is the yellow family.
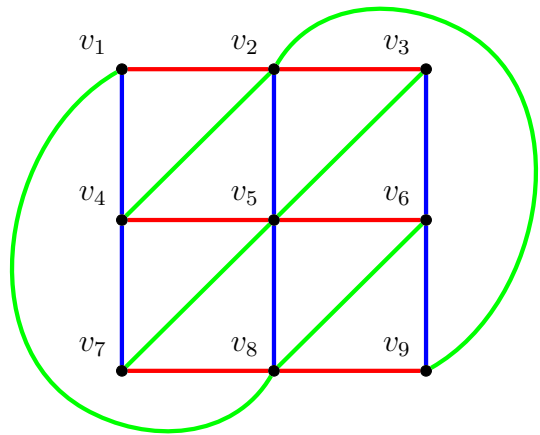
\begin{figure}[!htbp]
\centering
\begin{tikzpicture}
\node[main node, label={135:$v_1$}] (v1) at (-2,2) {};
\node[main node, label={135:$v_2$}] (v2) at (0,2) {};
\node[main node, label={135:$v_3$}] (v3) at (2,2) {};
\node[main node, label={135:$v_4$}] (v4) at (-2,0) {};
\node[main node, label={135:$v_5$}] (v5) at (0,0) {};
\node[main node, label={135:$v_6$}] (v6) at (2,0) {};
\node[main node, label={135:$v_7$}] (v7) at (-2,-2) {};
\node[main node, label={135:$v_8$}] (v8) at (0,-2) {};
\node[main node, label={135:$v_9$}] (v9) at (2,-2) {};

\draw[red, ultra thick] (v1) to (v2);
\draw[red, ultra thick] (v2) to (v3);
\draw[red, ultra thick] (v4) to (v5);
\draw[red, ultra thick] (v5) to (v6);
\draw[red, ultra thick] (v7) to (v8);
\draw[red, ultra thick] (v8) to (v9);

\draw[blue, ultra thick] (v1) to (v4);
\draw[blue, ultra thick] (v4) to (v7);
\draw[blue, ultra thick] (v2) to (v5);
\draw[blue, ultra thick] (v5) to (v8);
\draw[blue, ultra thick] (v3) to (v6);
\draw[blue, ultra thick] (v6) to (v9);

\draw[green, ultra thick] (v2) to (v4);
\draw[green, ultra thick] (v2) to[out=60,in=150] (2.5,2.5) to[out=-30,in=30](v9);
\draw[green, ultra thick] (v3) to (v5);
\draw[green, ultra thick] (v5) to (v7);
\draw[green, ultra thick] (v1) to[out=-150,in=150] (-2.5,-2.5) to[out=-30,in=-120] (v8);
\draw[green, ultra thick] (v6) to (v8);
\end{tikzpicture}
\caption{The graph $H$ when $k=3$ and $\mathcal{F}$ is the yellow family.}
\label{fig:affinenoyellow}
\end{figure}

We will construct the graph $G$ from $H$ by deleting a set $S$ of $k$ carefully chosen vertices and then blowing up the remaining vertices into vertex sets.

Pick any monochromatic component $C$ in $H$. The vertices of $C$ are $\{u_1,\dots,u_k\}$. We formed $H$ by deleting the edges of $\mathcal{F}$, a family of $k$ disjoint $K_k$, from $K_{k^2}$. Each $u_i\in C$ is in a different $K_k$ of $\mathcal{F}$. Let $w$ be another vertex in the same $K_k$ of $\mathcal{F}$ as $u_1$. We take $S$ to be the set $\{w,u_2,\dots,u_k\}$.

If we delete the vertices of $S$ from $H$, then there are $k^2-k$ vertices left and each of these has $(k-1)^2$ neighbours. We form the graph $G$ by blowing up this graph so each vertex set contains $\frac n{k^2-k}$ where $n$ is some multiple of $k^2-k$. We may colour the edges of $G$ as follows: the edges between vertex sets inherit their colour from the edges of $H$ and the edges within vertex sets are coloured arbitrarily.

Figure \ref{fig:deleteS} shows the case where $k=3$ and $\mathcal{F}$ is the yellow family. We choose the green component consisting of $v_1$, $v_6$ and $v_8$ as the basis for the set $S$ and choose $v_6$ to be $u_1$. As $v_2$ lies in the same yellow component of the affine plane as $v_6$, we choose $v_2$ to be $w$. Then $S=\{v_1, v_2, v_8\}$ and these are the vertices we remove from $H$ before blowing up the remaining vertices into vertex sets. All edges between vertex sets are present in Figure \ref{fig:deleteS}.
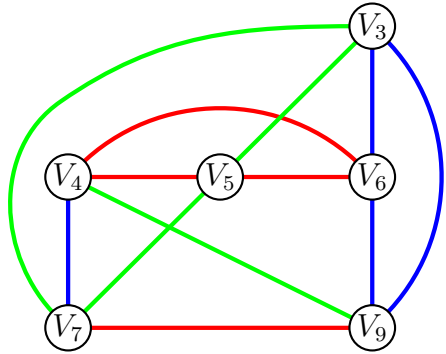
\begin{figure}[htbp]
\centering
\begin{tikzpicture}
\draw (2,2) node (v3) {};
\draw (-2,0) node (v4) {};
\draw (0,0) node (v5) {};
\draw (2,0) node (v6) {};
\draw (-2,-2) node (v7) {};
\draw (2,-2) node (v9) {};
	
\draw[red, ultra thick] (v4) to (v5);
\draw[red, ultra thick] (v5) to (v6);
\draw[red, ultra thick] (v4) to[out=45,in=135] (v6);
\draw[red, ultra thick] (v7) to (v9);

\draw[blue, ultra thick] (v4) to (v7);
\draw[blue, ultra thick] (v3) to (v6);
\draw[blue, ultra thick] (v6) to (v9);
\draw[blue, ultra thick] (v3) to[out=-45,in=45] (v9);

\draw[green, ultra thick] (v4) to (v9);
\draw[green, ultra thick] (v3) to (v5);
\draw[green, ultra thick] (v5) to (v7);
\draw[green, ultra thick] (v3) to[out=180,in=35] (-2.1,1) to[out=-145,in=135] (v7);

\draw[thick, fill=white] (v3) circle (0.3cm) node {$V_3$};
\draw[thick, fill=white] (v4) circle (0.3cm) node {$V_4$};
\draw[thick, fill=white] (v5) circle (0.3cm) node {$V_5$};
\draw[thick, fill=white] (v6) circle (0.3cm) node {$V_6$};
\draw[thick, fill=white] (v7) circle (0.3cm) node {$V_7$};
\draw[thick, fill=white] (v9) circle (0.3cm) node {$V_9$};
\end{tikzpicture}
\caption{The graph $G$ when $k=3$.}
\label{fig:deleteS}
\end{figure}

Each vertex in $G$ has degree $n(1-\frac{k-2}{k^2-k})-1$. Note that, if $n$ is sufficiently large, then $n(1-\frac{k-2}{k^2-k})-1\geq n(1-\frac{k-1}{k^2})$. Currently, the largest monochromatic component of $G$ has order exactly $\frac n{k-1}$. However, by slightly perturbing the sizes of the vertex sets, we can ensure that the largest monochromatic component of $G$ has strictly fewer than $\frac n{k-1}$ vertices.

Let $n=k(k-1)x+1$ where $x\geq2k$. In the graph $G$ described above, let the vertex sets have orders as follows:
\begin{itemize}
\item The vertex set corresponding to $u_1$ has order $x-(k-2)$
\item The $k-1$ vertex sets which correspond to the $k-1$ vertices that were in the same component of colour $k$ as $w$ have order $x+1$
\item All other vertex sets have order $x$. 
\end{itemize}
Figure \ref{fig:counterk} shows the perturbed graph $G$ for the case where $k=3$.
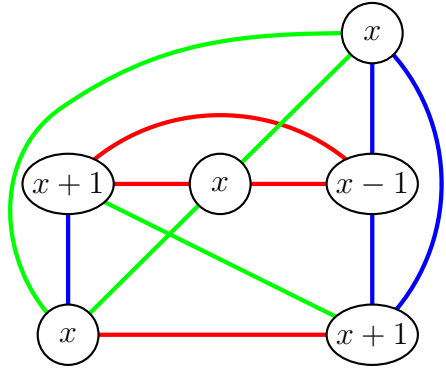
\begin{figure}[htbp]
\centering
\begin{tikzpicture}
\draw (2,2) node (v3) {};
\draw (-2,0) node (v4) {};
\draw (0,0) node (v5) {};
\draw (2,0) node (v6) {};
\draw (-2,-2) node (v7) {};
\draw (2,-2) node (v9) {};

\draw[red, ultra thick] (v4) to (v5);
\draw[red, ultra thick] (v5) to (v6);
\draw[red, ultra thick] (v4) to[out=45,in=135] (v6);
\draw[red, ultra thick] (v7) to (v9);

\draw[blue, ultra thick] (v4) to (v7);
\draw[blue, ultra thick] (v3) to (v6);
\draw[blue, ultra thick] (v6) to (v9);
\draw[blue, ultra thick] (v3) to[out=-45,in=45] (v9);

\draw[green, ultra thick] (v4) to (v9);
\draw[green, ultra thick] (v3) to (v5);
\draw[green, ultra thick] (v5) to (v7);
\draw[green, ultra thick] (v3) to[out=180,in=35] (-2.1,1) to[out=-145,in=135] (v7);

\draw[thick, fill=white] (v3) circle (0.4cm) node {$x$};
\draw[thick, fill=white] (v4) ellipse (0.6cm and 0.4cm) node {$x+1$};
\draw[thick, fill=white] (v5) circle (0.4cm) node {$x$};
\draw[thick, fill=white] (v6) ellipse (0.6cm and 0.4cm) node {$x-1$};
\draw[thick, fill=white] (v7) circle (0.4cm) node {$x$};
\draw[thick, fill=white] (v9) ellipse (0.6cm and 0.4cm) node {$x+1$};
\end{tikzpicture}
\caption{The perturbed graph $G$ when $k=3$.}
\label{fig:counterk}
\end{figure}

Each of the $k-1$ vertex sets of order $x+1$ was in a different clique of colour $k+1$ in the affine plane. Therefore, each monochromatic component of $G$ contains at most one of them, with the exception of the colour $k$ clique that contains all of them. This clique has order $(k-1)(x+1)\leq kx$ if $x\geq k-1$.

The only monochromatic cliques that contain $k$ vertex sets and a vertex set of order $x+1$ must also contain the vertex set of size $x-(k-2)$ and hence have order $kx-k-3\leq kx$. This is because the monochromatic component is not colour $k$ and so has to intersect with each of the cliques of colour $k$ in the affine plane. The only vertex set left in its colour $k$ clique is the one of order $x-(k-2)$. Hence every monochromatic component of $G$ has order at most $kx<\frac{n}{k-1}$.

The minimum degree of the graph $G$ is $(k-1)^2x+x-1$ because every vertex set is missing exactly $k-2$ other vertex sets and at most one of these has order $x+1$.  We further have $\delta(G)\geq (k^2-2k+2)x-1\ge \left(1-\frac{k-2}{k(k-1)}\right)n-2$ as required.

\section{Conclusion}
For any $n\in\mathbb{N}$ and $k\geq3$, let $f_k(n)$ be the maximum value such that there exists a graph $G$ on $n$ vertices with $\delta(G)=f_k(n)$ and a $k$-edge-colouring of $G$ where every monochromatic component has order strictly less than $\frac n{k-1}$. Theorem \ref{thm:56} implies that $f_3(n)<\left\lfloor\frac56n\right\rfloor$. In the proof of Theorem \ref{thm:counter3}, we found
\begin{align*}
f_3(n)&=\left\lfloor\frac56n\right\rfloor-1&\text{if }n&\equiv1,2,3,4,5\mod6\\
f_3(n)&\in\left\{\left\lfloor\frac56n\right\rfloor-2,\left\lfloor\frac56n\right\rfloor-1\right\}&\text{if }n&\equiv0\mod6.
\end{align*}
\noindent
We believe that $f_3(n)=\left\lfloor\frac56n\right\rfloor-1$ for all residue classes but we were unable to find an example when $n\equiv0\mod6$.

Theorems \ref{thm:56} and \ref{thm:counter3} prove that Conjecture \ref{conj:main} is false for $k=3$ and that the correct constant is $\frac56$. It is natural to ask what the correct bound is for other values of $k$. From the proof of Theorem \ref{thm:counterk}, we find that
\begin{equation*}
f_k(n)\geq n\left(1-\frac{k-2}{k(k-1)}\right)-2+\frac{k-2}{k(k-1)}\qquad\text{if }n\equiv1\mod k(k-1)
\end{equation*}
for infinitely many values of $k$.

We believe that the graphs constructed in the proof of Theorem \ref{thm:counterk} are close to optimal and hence we make the following conjecture.
\begin{conjecture}
\label{conj:new}
Fix $k\geq3$. Let $G$ be any graph with $n$ vertices and $\delta(G)\geq\left(1-\frac{k-2}{k(k-1)}\right)n$. If the edges of $G$ are $k$-coloured, then there exists a monochromatic component of order at least $\frac{n}{k-1}$.
\end{conjecture}

Although our methods for proving the upper bound extend in principle to $4$ or more colours, the computational time needed to run all of the required linear programs makes it infeasible to do so. For example, when $k=4$, a naive implementation of our approach would entail solving around $2^{240}$ linear programs. It would be nice to see whether Conjecture \ref{conj:new} is correct.

\subsection*{Note} This paper was submitted for review in September 2019. We later discovered that Rahimi \cite{R20} had independently proved Theorem \ref{thm:56}.

\subsection*{Acknowledgements}
We would like to thank Louis DeBiasio for his suggestions as to how a general counterexample to Conjecture \ref{conj:main} might be constructed. We would also like to thank the two anonymous referees for their helpful comments.

\appendix
\section{Implementing the linear programs}
\label{sec:lpcode}
The main obstacle in implementing Linear Programs \ref{lp:333} and \ref{lp:334} was the large number of possible intersection patterns ($2^{27}$ and $2^{36}$ respectively) that needed to be checked. We therefore used the implicit symmetry of the problem to reduce the number of linear programs which needed to be run.

Recall that the intersection pattern is given by the variables $(y_{ijk})$ where
\begin{equation*}
y_{ijk}=
\begin{cases}
1&\text{if }R_i\cap B_j\cap G_k\neq\emptyset\\
0&\text{otherwise.}
\end{cases}
\end{equation*}
We may assume that there are at least $3$ components of each colour (Lemma \ref{lem:2components}) and that each component intersects at least one component in each of the other colours (Lemma \ref{lem:allcolours}).

Given an intersection pattern $(y_{ijk})$, if there exists $i$ such that $y_{ijk}=0$ for all $j$ and all $k$, then this corresponds to the red component $R_i$ being empty (i.e there are at most two red components). It is therefore not necessary to run the linear program for this intersection pattern. (Indeed, the linear program has a value of $\alpha=\frac78$ which is optimal if we do not specify that there must be at least three components of each colour.)

We therefore excluded intersection patterns which corresponded to one of the components being empty. As this check only requires knowledge of the current intersection pattern, it is straightforward to do it when the intersection pattern has been generated.

There are two sources of symmetry in the problem: between components of the same colour and between components of different colours. Let us consider both.

Firstly, suppose we are given two intersection patterns, $(y_{ijk})$ and $(y'_{ijk})$. Suppose that, for some $t\in[2]$ and every $j$ and $k$, we have $y_{ijk}=y'_{(i+t)jk}$ where $i+t$ is calculated modulo $3$. Any optimal solution for $(y_{ijk})$ will also be an optimal solution for $(y'_{ijk})$ but with the red components relabelled. Therefore we only need to run the linear program for one of these intersection patterns to obtain the optimal solution for both. We can extend this idea to any intersection patterns which are the same up to relabelling of the components.

Secondly, if we have two intersection patterns $(y_{ijk})$ and $(y'_{ijk})$ such that, for every $k$, we have $y_{ijk}=y'_{jik}$, then any optimal solution for $(y_{ijk})$ will also be an optimal solution for $(y'_{ijk})$ but with the red and blue components swapped. Again we would only need to run the linear program for one of these intersection patterns to obtain the optimal solution for both. In the case where all three colours have exactly three components, all three colours are interchangeable; in the case where there are three red, three blue and four green components, only the red and blue components may be switched.

Unlike checking whether an intersection pattern corresponds to one of the components being empty, finding intersection patterns which are the same up to symmetry requires knowledge of both the current intersection pattern and other possible intersection patterns. Memory constraints make it impractical to generate all non-equivalent intersection patterns before running the linear programs. Instead, we consider only a subset of symmetries that we can handle efficiently using a version of lexicographic ordering.

First, for simplicity, suppose that we only have two colours, red and blue, and that the intersection matrix is given by $Z=(z_{ij})$ where $z_{ij}$ represents whether or not $R_i\cap B_j$ is empty. Swapping two rows in $Z$ corresponds to swapping the labels of two red components and similarly for columns and blue components. We define the \textit{lex value} of $Z$ to be
\begin{equation*}
\lex(Z)=\sum_{i=0}^2\sum_{j=0}^2z_{ij}100^{-i-j}.
\end{equation*}
Swapping pairs of rows and/or pairs of columns in $Z$ can change its lex value. Configurations with more entries that are 1 in the top left corner of the matrix will give higher lex values that those where the top left corner contains many 0s.

By only swapping pairs of rows or pairs of columns that strictly increase the value of $\lex(Z)$, we will eventually reach $Z'$, a configuration of $Z$ where the lex value is $\maxlex(Z)$, the unique maximum possible lex value of $Z$. Both $Z$ and $Z'$ are possible intersection patterns and, because we obtained $Z'$ from $Z$ through a series of row and column swaps, $Z$ and $Z'$ are equivalent intersection patterns.

We only run a linear program on $Z$ if $\lex(Z)=\maxlex(Z)$. This significantly reduces the number of linear programs that need to be run whilst still ensuring that at least one linear program is run for each class of equivalent intersection patterns.

Now consider the situation we actually have where the intersection pattern is given by $Y=(y_{ijk})$. Whilst we could extend the definition of lex value to a three-dimensional matrix, it proved cumbersome to calculate the maximum lex. Instead, we calculated $\lex(Y^{(k)})$ where $Y^{(k)}$ is the $3\times3$ matrix obtained by restricting to a fixed value of $k$. We ran the linear program on $Y$ if $\lex(Y^{(k)})\geq\lex(Y^{(k+1)})$ for every $k$ and $\lex(Y^{(1)})=\maxlex(Y^{(1)})$. This method eliminated sufficiently many intersection patterns to make the computation tractable.
\end{document}